\documentclass[a4paper]{scrarticle}
\usepackage{amscd,amssymb,amsmath,amsthm}
\usepackage{graphicx}
\usepackage[backend=bibtex,style=ieee,giveninits=true,sorting=anyt]{biblatex}
\usepackage{hyperref}
\usepackage{color}
\usepackage{todonotes}
\usepackage[lofdepth,lotdepth]{subfig}
\addbibresource{HenningBibDiss3.bib}
\newcommand{\Z}{\mathbb{Z}}
\newtheorem{thm}{Theorem}
\newtheorem{defn}{Definition}
\newtheorem{lemma}{Lemma}
\newtheorem{pro}{Proposition}
\newtheorem{rk}{Remark}
\newtheorem{cor}{Corollary}

\hyphenation{tree-auto-mor-phism}
\newcommand{\footnoteremember}[2]{
	\footnote{#2}
	\newcounter{#1}
	\setcounter{#1}{\value{footnote}}
}
\newcommand{\footnoterecall}[1]{
	\footnotemark[\value{#1}]
}
\begin{document}
\title{Coexistence of 	
localized Gibbs measures and delocalized gradient Gibbs measures on trees}	
\author{
	Florian Henning,\footnoteremember{RUB}{Ruhr-Universit\"at   Bochum, Fakult\"at f\"ur Mathematik, D44801 Bochum, Germany} \footnote{Florian.Henning@ruhr-uni-bochum.de,
	\newline \url{https://www.ruhr-uni-bochum.de/ffm/Lehrstuehle/Kuelske/henning.html}}
		\and Christof K\"ulske \footnoterecall{RUB} \footnote{Christof.Kuelske@ruhr-uni-bochum.de, 
			\newline
		\url{https://www.ruhr-uni-bochum.de/ffm/Lehrstuehle/Kuelske/kuelske.html}}\, 
	\,  
}
	
	\maketitle
	
\begin{abstract}
We study gradient models for spins taking values in the integers (or an integer lattice), 
which interact via a general potential depending only on the differences 
of the spin values at neighboring sites, located on a regular tree with $d+1$ neighbors. 
We first provide general conditions in terms of the relevant 
$p$-norms of the associated transfer operator $Q$
which ensure the existence of a countable family of 
proper Gibbs measures, describing localization at different heights.
Next we prove existence of delocalized gradient Gibbs measures, under natural 
conditions on $Q$. We show that the two conditions can be fulfilled at the same time, which then implies coexistence of both types of measures 
for large classes of models including the SOS model, and heavy-tailed models 
arising for instance for potentials of logarithmic growth. 
\end{abstract}

\textbf{Key words:}  Gibbs measures, gradient Gibbs measures, localization, delocalization, 
regular tree, boundary law, heavy tails.  
\section{Introduction}
Random fields with gradient interactions have been studied on graphs with various 
geometries, foremost on the lattice, but also on different graphs, like infinite trees \cite{FS97},\cite{CoDeuMu09},\cite{DeuGiIo00},\cite{KoL14},\cite{BK94},\cite{HKLR19}.

In the present chapter we look at $\mathbb{Z}^k$-valued spin variables $\sigma_x$ 
located on the vertices $x$ of  a $d$-regular tree, where each site has $d+1$ neighbors,  
with gradient interaction given by an even function $U:\mathbb{Z}^k \rightarrow \mathbb{R}$.  
The Hamiltonian of such a model becomes 
\begin{equation*}
\sum_{x \sim y}U(\sigma_x-\sigma_y),
\end{equation*}
where the sum runs over pairs of nearest neighbors on the tree.
We may assume without loss that $U(0)=0$. 
As there are no Gibbs measures on the line $d=1$, we assume that $d\geq 2$, 
with a particular interest in the case of large $d\uparrow \infty$. 

Our first aim is to prove existence and localization properties of corresponding  
Gibbs measures, under minimal growth assumptions on the interaction.   
In concrete examples we will be interested mainly in the 
case of local spin space dimension $k=1$, but our method of proof and our general estimates of Theorems \ref{Coexthm: main} and \ref{thm: LargeDegree} 
work equally well for the case of higher $k$. 

Gibbs measures on trees have been mostly studied 
for finite local spin spaces, including the  
Ising model, the Potts models, 
and the discrete Widom-Rowlinson model. 
The translation invariant (splitting) Gibbs measures can be described 
in terms of the roots of polynomials in many such cases.  
Our present problem is more difficult as it is infinite-dimensional 
even for the tree-automorphism invariant states, so we can not 
hope for explicit solutions in the general case. There may be 
no solutions at all, due to non-compactness of the local state space. 
We will construct our measures via fixed points in suitable $l^p$-balls 
in regimes of sufficiently strong confinement, via a new contraction method. 
Some readers may feel reminded of Dobrushin uniqueness  
which is a different contraction method 
(cp. \cite{DoPe83} and Theorem 8.7 in \cite{Ge11})
but this would work only in the additional presence of a confining single-site potential which we don't have, and prove uniqueness
of the Gibbs measure. 
Dobrushin uniqueness makes no assumptions on the structure of the underlying 
graph other than upper bounds on the number of neighbors, but 
works only in regimes of weak interactions (small inverse temperatures)  between the variables, 
showing near independence of variables at different sites.
Our method 
%\todo{Hier wurde auf den ersten Absatz von Report 1 Bezug genommen.}
is based on the description of Gibbs measures on trees via the so-called boundary law equation introduced by Zachary \cite{Z83}. 
It works in suitable regimes of strong interactions (corresponding to 
large inverse temperatures). Intuitively this 
means that the influence  
from the variables at the neighbors to the variable at a given vertex needs to be strong enough to localize its distribution also in infinite volume, for which 
large degrees $d$ are beneficial. 
The method yields an easy to handle model-independent 
criterion for existence of countably many different localized states    
(as there are countably many possible localization centers in the local state space).

In the second main part we turn to the existence of \textit{delocalized gradient} Gibbs measures.  
Gradient Gibbs measures (GGM) play an important role in mathematical physics and probability 
in the description of interfaces \cite{FS97}, \cite{Sh05}, \cite{EnKu08}, \cite{CKu12}. 
Gradient measures only describe increments, and do not carry information about the absolute height, 
as opposed to the proper Gibbs measures. If they are \textit{delocalized} it means that the gradient description is the only possible one: In this case there is no proper 
Gibbs measure which would carry information also about the absolute height, whose gradient distribution coincides with the delocalized gradient measure. In the present chapter we will show that the existence problems for 
both fundamentally different types of measures nevertheless allow for a unified treatment. 
For this we construct a good set $G_d$ with relevance for both problems, but where
different norms have to be taken.
As a consequence of our approach, we obtain coexistence regions 
for both types of measures. 

We will construct GGMs which depend on an internal parameter, the \textit{period}  
$q=2,3,4,\dots $. This provides us again, as in the case of proper Gibbs measures, 
with a countable family of measures, but for the GGMs this family is indexed by $q$. 
The reason for the occurrence of the family of GGMs is 
much different than in the case of the proper Gibbs measures, 
as GGMs for different $q$ are structurally different (see Corollary \ref{cor: Period}).

On the other hand, we also provide examples for the surprising case 
where localized Gibbs measures do exist, 
but (height period-$q$) delocalized gradient Gibbs measures can not exist. 
The construction of the latter involves nonsummable transfer operators
on trees of sufficiently large degrees. 
\subsection{Results}

It is convenient to describe the model equivalently in terms of the so-called 
\textbf{transfer operator}  $Q$ which associates to an increment 
$\sigma_x-\sigma_y=j\in \mathbb{Z}^k$ along the oriented edge $(x,y)$ 
of the graph, the weight \[Q(j)=e^{-\beta U(j)}.\]  
Clearly, logarithmic growth of $U$ as a function of the increment then corresponds to polynomial decay of $Q$.

\subsubsection{Existence of localized Gibbs measures for gradient models}

\textbf{Main existence theorem, possibly heavy-tailed transfer operators.}  
As our first main general result, we prove in Theorem  \ref{Coexthm: main}
the existence of tree-automorphism invariant states concentrated around any given $i\in \mathbb{Z}^k$, 
under the assumptions on the interaction which should be viewed as 
finiteness of the $\frac{d+1}{2}$-norm of $Q$, 
and smallness of its $d+1$-norm on $\mathbb{Z}^k \setminus \{0\}$. 
More precisely,  
the existence results hold whenever these two norms 
lie in a certain two-dimensional "good" set $G_{d}$ which depends only on the degree $d$ of the tree.  
This allows for heavy-tailed $Q$, but includes also cases of faster decay (e.g. SOS model or the discrete Gaussian). Note that heavy-tailed $Q$ trivially implies 
heavy-tailed single-site marginals for any Gibbs measure.

As a further consequence of our method we also obtain 
the quantitative localization bounds of Theorem \ref{Coexthm: main} on the single-site marginal of the measure. 

% For slowly decaying transfer operators 
%$Q(j)\sim |j|^{-\alpha}$ for large increments $i$, it is easy to see 
%that the single-site marginals $\mu_0(\sigma_x=j)$ of the 
%Gibbs measure corresponding to any boundary law solution 
%(if it exists at all) will necessarily be bounded below by $|j|^{-(d+1)\alpha}$ for large $|j|$. In particular, for $(d+1)\alpha<1$ there can be no Gibbs measures. 
%

\textbf{Existence of distinct localized Gibbs measures for $\mathbb{Z}_q$-clock models.} 
With our method we may 
also treat finite-spin $\mathbb{Z}_q=\{0,1,\dots, q-1\}$-valued models with discrete rotation symmetry beyond the cases of explicitly solvable 
boundary law equations. This is relevant for  clock-models, but will also be
relevant for Subsection \ref{ExDelocGrad}. 
We prove the existence of $q$ distinct ordered states, which are localized around a given spin value, together with quantitative control of the localization.  
This implies low-temperature ordering for general classes of 
models (for which the Potts-model and the discrete rotator model with scalar-product interaction are just special examples).  
The precise assumption on the interaction needed involves the analogous 
pair of $p$-norms as for the Gibbs measures of the gradient models,  
but on the finite spin space. 

\textbf{Ideas of proof.}
The method of proof for the gradient model is based on a study of Zachary's fixed point 
equation for boundary laws 
(which are positive measures on the single-spin space $\mathbb{Z}^k$) in a suitable $l^p$-space
via a contraction argument. 
For each choice of spin value $i\in \mathbb{Z}^k$ 
we construct a boundary law solution on a suitable
ball of boundary laws concentrated around $i$. 
The optimal choice of the exponent $p$ is determined
by Zachary's summability condition (which is explained by the 
requirement to have summable 
single-site marginals for the infinite-volume Gibbs 
measure corresponding to the boundary law). 
Given that, 
the choice of the exponents of the two different 
$Q$-norms in the hypothesis of our theorem is again optimal, 
and explained from Young's convolution inequality which appears in the proof. We do not need to assume convexity of the interaction. 
In this way we get 
non-uniqueness of Gibbs measures via uniqueness of localized boundary law solutions in $l^p$-balls. 
This general setup turns out to be useful as it allows to approach also the existence problems for gradient Gibbs measures below.

\subsubsection{Existence of delocalized 
	gradient Gibbs measures} \label{ExDelocGrad}

Recall that a gradient specification may admit gradient Gibbs measures 
even when it does not admit proper Gibbs measures.
Examples of such gradient Gibbs measures for tree models described by $Q$ 
have been discussed and constructed in \cite{KS17} and \cite{HKLR19}. 
We consider gradient Gibbs measures which can be constructed 
via $q$-height-periodic boundary laws for fixed height period $q$ (see Section \ref{Sec: A two-layer construction of gradient Gibbs measures}). 
While some very specific small-$q$
examples were already constructed in the case of the SOS model in \cite{HKLR19} via 
explicit solutions of polynomial equations, we 
are aiming here for a general existence theory allowing also for arbitrarily large $q$.  

\textbf{Existence.} In  Theorem \ref{thm: main2} we first provide 
a uniform-$q$ existence result in terms of the $1$-norm of the transfer operator $Q$. 
In the second part we state that the very same condition of Theorem \ref{Coexthm: main}
involving the good region $G_d$ (under an additional summability assumption),   
also ensures existence for large-enough periods $q$. 
A key idea of proof for this is to use continuity of the existence criterion derived
for the Gibbs measures at period $q=\infty$.

In this case we have found a 
coexistence region for localized Gibbs and delocalized gradient Gibbs measures.   
However, there is also another interesting regime 
for potentials of slow growth, see Subsection \ref{SSSDrei}: 
For large $d$ the good region $G_d$ may extend into the region of infinite $1$-norm.  
If this is the case, our results imply that localized Gibbs measures exist, but gradient Gibbs measures do not exist for any height-period $q$. 

\textbf{Localization vs. delocalization, two-layer construction of gradient measures.}
Theorem \ref{thm: deloc} explains the difference between proper localized 
Gibbs states, and the height-period $q$ gradient Gibbs measures, via 
properties of both types of measures restricted to a semi-infinite path on the tree. 
The corresponding random walk path localizes and has an invariant probability 
distribution for the Gibbs measures, while the random walk delocalizes for the 
height-period $q$ gradient measures.   
In the context of these statements 
we also provide a new and  useful view to the gradient Gibbs measures 
in terms of a two-layer hidden Markov model construction which is very intuitive and 
interesting in itself.   

\textbf {Identifiability.} We show under no further assumptions
that different height-periods and different boundary 
law solutions (modulo height-shift) lead to different gradient Gibbs states, 
using ergodicity. 
This extends previous partial identifiability results of \cite{HKLR19} obtained by algebraic 
arguments.

\subsubsection{Applications: Binary tree, large degree asymptotics, SOS model, 
	log-potential}\label{SSSDrei}

We discuss our general existence results of Theorem  \ref{Coexthm: main} and Theorem \ref{thm: main2}, in 
more detail for the binary tree, and in the limit of large degrees $d\uparrow \infty$. 
On the binary tree we obtain an explicit form of the boundary curves of the 
good region $G_2$ ensuring 
existence of proper Gibbs measure, see 
Figure \ref{fig: GoodSet} and Proposition \eqref{pro: GoodBinary}.

For large degrees $d$, discussing the asymptotics of the region $G_d$, 
we show the existence of localized states for $\beta \in (\beta(d),\infty)$ with the model-independent form of the asymptotics 
$\beta(d)\sim \frac{\log d}{d}\downarrow 0$ as $d$ gets large, 
see Theorem \ref{thm: LargeDegree}. 
We also illustrate our general estimates with the examples of two potentials, the SOS model with potential
\[U(j)= |j|, \] and the log-potential with \[U(j)= \log (1+|j|), \]
for various degrees $d$, for which we provide explicit numbers. 

The chapter is organized as follows.  We present our results on Gibbs measures in Section \ref{sec: Existence of localized Gibbs measures}, 
on gradient Gibbs measures in Section \ref{sec: Existence of delocalized gradient Gibbs measures}, and discuss applications in Section \ref{Coexsec: Applications}. The proofs
are found in Section \ref{Coexsec: Proofs}.
\subsection*{Acknowledgements} The authors thank an anonymous referee for clarifying remarks
and suggestions.\\ Florian Henning is partially supported by the Research Training Group 2131 \textit{High-dimensional phenomena in probability - Fluctuations and discontinuity} of the German Research Council (DFG).
\section{Existence of localized Gibbs measures} \label{sec: Existence of localized Gibbs measures}	
\subsection{Definitions}
We consider a class of models with an integer-lattice $\mathbb{Z}^k, k \geq 1$, as local state space and the Cayley-tree $\Gamma^d=(V,L)$ of order $d \geq 2$ (i.e., the $d$-regular tree) as index set and denote the configuration space $(\mathbb{Z}^k)^V$ by $\Omega$. 

More precisely, the Cayley tree $\Gamma^d$ is an infinite tree, i.e., a locally finite connected graph without cycles, such that
exactly $d+1$ edges originate from each vertex.
Two vertices $x,y \in V$ are called \textit{nearest neighbours} if
there exists an edge $l \in L$ connecting them. We will use the
notation $l=\{ x,y\}$. A collection of nearest neighbour
pairs $\{ x,x_1\}, \{ x_1,x_2\},...,\{
x_{n-1},y\}$ is called a \textit{path} from $x$ to $y$. The
distance $d(x,y)$ on the Cayley tree is the number of edges of the
shortest path from $x$ to $y$.
In contrast to the set of unoriented edges $L$ we also consider the set of oriented edges $\vec{L}$. An \textit{oriented edge pointing from $x$ to $y$ } is simply the pair $(x,y)$ of vertices $x,y \in V$. 
Furthermore, for any $\Lambda \subset V$ we define its outer boundary by
\begin{equation*}
\partial \Lambda := \{ x \notin \Lambda : d(x,y) = 1 \mbox{ for some } y \in \Lambda\}.
\end{equation*}
Now assume that we have a nearest-neighbour interaction potential $\Phi$ with corresponding strictly positive \textit{transfer operator} $Q$ defined by 
\begin{equation*}
Q_b(\zeta):=\exp\left(-\Phi_b(\zeta) \right)>0
\end{equation*}
for any edge $b=\{x,y\} \in L$ and $\zeta \in \mathbb{Z}^b$.

The kernels of the Gibbsian specification $(\gamma_\Lambda)_{\Lambda \subset \subset V}$ then read 
\begin{equation} \label{CoexDef: Gibbs specification}
\gamma_\Lambda(\sigma_\Lambda=\omega_\Lambda \mid \omega)=Z_\Lambda(\omega_{\partial \Lambda})^{-1} \prod_{b \cap \Lambda \neq \emptyset}Q_b(\omega_b).
\end{equation} 
Hence the family $(Q_b)_{b \in L} $ is required to that for any finite subvolume $\Lambda \subset V$ and any configuration $\omega \in V$ the partition function $Z_\Lambda(\omega)=Z_\Lambda(\omega_{\partial \Lambda})$ is a finite positive number (cp. condition (3.1) in \cite{Z83}).

In this chapter we focus on the special case of tree-automorphism invariant symmetric gradient interactions normalized at $0$, i.e., for any edge $b=\{x,y \} \in L$
\[Q_b(\omega_x,\omega_y)=Q(\omega_x-\omega_y)=\exp(-\beta U(\omega_x-\omega_y)),\] 
where the parameter $\beta>0$ will be regarded as inverse temperature and \\$U: \mathbb{Z}^k \rightarrow [0,\infty)$ is a symmetric function with $U(0)=0$.
\begin{defn}
	Let $S=\mathbb{Z}^k, \, \mathbb{Z}^k \setminus \{0\}, \, \mathbb{Z}_q \text{ or } \mathbb{Z}_q \setminus \{0\}$.
	
	For any $1 \leq p < \infty $ consider the Banach space
	\begin{equation*}
	l_{p}(S):=\big\{ x \in \mathbb{R}^S \mid \Vert x \Vert_{p,S}:=\big(\sum_{j \in S}  \vert x(j) \vert^p \big)^{\frac{1}{p}} < \infty  \big\}.
	\end{equation*}
\end{defn}

\subsection{Well-definedness and the main result }
Using this notation we are able to state our first result, stating that finiteness of the $\frac{d+1}{2}$-norm of a transfer operator $Q$ ensures well-definedness of its asscociated Gibbsian specification.
\begin{lemma} \label{lem: existenceModel}
	If $\Vert Q \Vert_{\frac{d+1}{2},\mathbb{Z}^k}<\infty$ then the Gibbsian specification \eqref{CoexDef: Gibbs specification} is well defined, i.e., for any finite connected volume $\Lambda \subset V$ and any boundary condition $\omega_{\partial \Lambda}$ the partition function $Z_\Lambda(\omega_{\partial \Lambda})$ is a finite number.
\end{lemma}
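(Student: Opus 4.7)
My approach would be to root the tree at an arbitrary vertex $o \in \Lambda$ and propagate suitable $l^p$-norm bounds on branch partition functions upward from the leaves of $\Lambda$ to the root, using Young's convolution inequality as the main tool. For each $x \in \Lambda$ I would define the branch weight
\[
W_x(\sigma_x) := \sum_{\sigma_{(T_x \cap \Lambda) \setminus \{x\}}} \prod_{\substack{b \cap T_x \cap \Lambda \neq \emptyset \\ b \subset T_x \cup \partial\Lambda}} Q_b(\sigma_b),
\]
where $T_x$ is the subtree hanging below $x$ under the rooting at $o$. Standard tree factorization then yields the recursion
\[
W_x(\sigma_x) = \prod_{\substack{c \text{ child of } x \\ c \in \Lambda}} (Q * W_c)(\sigma_x) \cdot \prod_{\substack{c \text{ child of } x \\ c \in \partial\Lambda}} Q(\sigma_x - \omega_c),
\]
together with the identification $Z_\Lambda(\omega_{\partial\Lambda}) = \Vert W_o \Vert_{1,\mathbb{Z}^k}$.

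The core step would be a tree-wide induction, from leaves to root, proving that $\Vert W_x \Vert_{(d+1)/d,\mathbb{Z}^k} < \infty$ for every non-root $x \in \Lambda$. For leaves this follows from a direct generalized Hölder estimate $\Vert W_x \Vert_{(d+1)/d} \leq \Vert Q \Vert_{d+1}^{d}$, which is finite because on counting-measure $l^p$-spaces one has $\Vert Q \Vert_{d+1} \leq \Vert Q \Vert_{(d+1)/2}$. In the inductive step, I would apply generalized Hölder across the $d$ children of $x$ to reduce the claim to controlling $\Vert Q * W_c \Vert_{d+1}$ and $\Vert Q(\cdot - \omega_c) \Vert_{d+1}$ separately; Young's convolution inequality then gives $\Vert Q * W_c \Vert_{d+1} \leq \Vert Q \Vert_{(d+1)/2}\,\Vert W_c \Vert_{(d+1)/d}$, the exponents closing precisely because $1 + \tfrac{1}{d+1} = \tfrac{2}{d+1} + \tfrac{d}{d+1}$. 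This identity is exactly the algebraic reason the hypothesis involves the $(d+1)/2$-norm.

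At the root, where $o$ has $d+1$ children instead of $d$, one application of generalized Hölder with $d+1$ equal exponents bounds $\Vert W_o \Vert_1$ by a product of $l^{d+1}$-norms, and a final use of Young yields the desired bound in terms of $\Vert Q \Vert_{(d+1)/2}$ and the already-controlled $\Vert W_c \Vert_{(d+1)/d}$, giving $Z_\Lambda < \infty$. The main technical obstacle is identifying the correct intermediate exponent $p = (d+1)/d$ that makes the Hölder--Young recursion close in one consistent pass; any other natural choice seems to fail. Once this exponent is in hand, each individual inequality is a routine application of discrete Young's convolution (valid since $(d+1)/2 \geq 1$ and $(d+1)/d \geq 1$ for $d \geq 1$) and generalized Hölder; the degenerate case $\vert\Lambda\vert = 1$ is handled separately by the immediate Hölder bound $Z_\Lambda \leq \Vert Q \Vert_{d+1}^{d+1}$.
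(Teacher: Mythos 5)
Your proposal is correct and is essentially the paper's own argument: a recursion along the tree in which the partial (branch) partition functions are kept in $l_{\frac{d+1}{d}}(\mathbb{Z}^k)$, closed by the generalized H\"older inequality over the $d$ (resp.\ $d+1$) factors at exponent $d+1$ together with Young's convolution inequality for $1+\frac{1}{d+1}=\frac{2}{d+1}+\frac{d}{d+1}$, which is exactly how $\Vert Q\Vert_{\frac{d+1}{2},\mathbb{Z}^k}$ enters in the paper as well. The only difference is packaging: the paper runs an induction on the volume for an auxiliary boundary-weighted partition function $\bar{Z}_\Lambda$ with arbitrary strictly positive weights in $l_{\frac{d+1}{d}}(\mathbb{Z}^k)$ and then deduces finiteness of $Z_\Lambda(\omega_{\partial\Lambda})$ term by term, whereas you fix the boundary condition and recurse from the leaves of the rooted volume to the root, with identical estimates.
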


For any integer $d\geq2$ define the \textit{good set}
\begin{equation} \label{eq: good set}
\begin{split}
G_d:=\{&(\gamma, \delta) \in (1,\infty) \times (0,\infty) \mid \text{There exists an } \varepsilon>0 \text{ such that } \cr 
&\delta+\gamma \varepsilon^d  \leq \varepsilon \quad \text{ and } \quad 2d\gamma \varepsilon^{d-1}+2d\delta \varepsilon^d<1\}.
\end{split}
\end{equation}
Using this notation, our main theorem reads the following.
\begin{thm} \label{Coexthm: main}
	Fix any integer $d \geq 2$. For any strictly positive transfer operator $Q$ with $Q(0)=1$ set $\gamma:=\Vert Q \Vert_{\frac{d+1}{2}, \mathbb{Z}^k}$ and $\delta:=  \Vert Q \Vert_{d+1, \mathbb{Z}^k \setminus \{0\}}$. 
	
	If $(\gamma, \delta) \in G_d$ then there exists a family of distinct tree-automorphism invariant Gibbs measures $(\mu_i)_{i \in \Z^k}$ which are equivalent under joint translation of the local spin spaces.
	
	Moreover, the single-site marginal of each $\mu_i$ satisfies the following localization bounds
	\begin{equation*}
	\left(\delta \frac{1-\delta\varepsilon(\gamma ,\delta)^d}{1+\gamma\varepsilon(\gamma,\delta)^{d-1}} \right)^{d+1} \leq \frac{\mu_i(\sigma_0 \neq i)}{\mu_i(\sigma_0=i)}\leq \left( \delta \frac{1+\delta\varepsilon(\gamma, \delta)^d}{1-\gamma\varepsilon(\gamma,\delta)^{d-1}} \right),^{d+1} 
	\end{equation*}
	where $\varepsilon(\gamma,\delta)$ denotes the smallest positive solution to the equation
	\begin{equation*}
	\varepsilon=\gamma \varepsilon^d+\delta .
	\end{equation*}
\end{thm}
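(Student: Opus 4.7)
My strategy is to recast the problem via Zachary's boundary law formalism and solve the resulting nonlinear fixed-point equation by Banach's principle on a small ball in a carefully chosen $l_p$-space. A tree-automorphism invariant Gibbs measure for the specification \eqref{CoexDef: Gibbs specification} corresponds to a positive function $l : \mathbb{Z}^k \to (0,\infty)$ (a boundary law) solving, up to a multiplicative constant,
\begin{equation*}
l(i) = \Bigl( \sum_{j \in \mathbb{Z}^k} Q(i-j)\, l(j) \Bigr)^{d}, \qquad i \in \mathbb{Z}^k,
\end{equation*}
together with Zachary's summability condition guaranteeing the extension to a proper infinite-volume measure. Since $Q$ depends only on differences, any translate of a solution is again a solution; hence it suffices to produce one solution $l^{\ast}$ concentrated around $0$, and the family $(\mu_i)_{i \in \mathbb{Z}^k}$ arises by joint translation of the local spin spaces.

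\textbf{Setup and choice of norms.} I normalize $l(0)=1$, write $\tilde l$ for the restriction of $l$ to $\mathbb{Z}^k \setminus \{0\}$, and divide the equation at $i\neq 0$ by the equation at $i=0$. Using $Q(0)=1$ this yields the equation $\tilde l = T(\tilde l)$ with
\begin{equation*}
T(\tilde l)(i) = \Bigl( \frac{Q(i) + \sum_{j\neq 0} Q(i-j)\, \tilde l(j)}{1 + \sum_{j\neq 0} Q(j)\, \tilde l(j)} \Bigr)^{d}, \qquad i \neq 0.
\end{equation*}
I will look for a fixed point of $T$ in the closed ball $B_\varepsilon$ of radius $\varepsilon = \varepsilon(\gamma,\delta)$ in the nonnegative cone of $l_{p}(\mathbb{Z}^k\setminus\{0\})$ with $p = (d+1)/d$. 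This exponent is forced: Zachary's summability condition translates exactly to $l^{\ast} \in l_{p}(\mathbb{Z}^k)$, and Young's convolution inequality $\Vert Q \ast \tilde l \Vert_{d+1} \leq \Vert Q \Vert_{(d+1)/2} \Vert \tilde l \Vert_p$ (from the relation $\tfrac{2}{d+1} + \tfrac{d}{d+1} = 1 + \tfrac{1}{d+1}$) is exactly what makes the two norms $\gamma$ and $\delta$ appear in the hypothesis.

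\textbf{Self-map and contraction.} For $\tilde l \in B_\varepsilon$, Minkowski on $\mathbb{Z}^k\setminus\{0\}$ combined with Young gives
\begin{equation*}
\Vert Q(\cdot) + (Q\ast\tilde l)(\cdot)\Vert_{d+1, \mathbb{Z}^k \setminus \{0\}} \leq \delta + \gamma \Vert\tilde l\Vert_p \leq \delta + \gamma \varepsilon.
\end{equation*}
Taking $d$-th powers pointwise and then the $l_p$-norm, using $\Vert f^d\Vert_p = \Vert f \Vert_{dp}^d = \Vert f \Vert_{d+1}^d$ and the trivial lower bound $\geq 1$ on the denominator of $T$, I obtain $\Vert T(\tilde l)\Vert_p \leq (\delta + \gamma \varepsilon)^{d}$; combined with the fixed-point identity $\varepsilon = \gamma\varepsilon^d + \delta$ defining $\varepsilon(\gamma,\delta)$ and the first defining inequality of $G_d$, this produces the self-mapping property $T(B_\varepsilon) \subset B_\varepsilon$. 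For the Lipschitz estimate I decompose
\begin{equation*}
\frac{N_1}{D_1} - \frac{N_2}{D_2} = \frac{N_1 - N_2}{D_1} + \frac{N_2(D_2 - D_1)}{D_1 D_2},
\end{equation*}
apply the elementary inequality $|a^d - b^d| \leq d \max(a,b)^{d-1}|a-b|$, use Young once more on the $l_{d+1}$-valued numerator perturbation and Hölder with conjugate exponents $d+1$ and $(d+1)/d$ on the scalar denominator perturbation. This yields a Lipschitz constant of $T$ bounded by $2d\gamma\varepsilon^{d-1} + 2d\delta\varepsilon^d$, which is strictly less than $1$ by the second defining inequality of $G_d$. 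Banach's fixed-point theorem then provides a unique $\tilde l^{\ast} \in B_\varepsilon$. From the extended boundary law $l^\ast \in l_{p}(\mathbb{Z}^k)$ I obtain, via Zachary's theorem, the Gibbs measure $\mu_0$; its translates give $\mu_i$. Inserting the bounds $\delta - \gamma\varepsilon^d \leq \sum_{j\neq 0} Q(j) l^\ast(j) \leq \delta + \gamma\varepsilon^d$ into the single-site marginal formula on the $(d+1)$-regular tree yields the stated two-sided localization inequalities, which in turn force the measures $\mu_i$ to be pairwise distinct.

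\textbf{Main obstacle.} The delicate point is the simultaneous quantitative control of the $l_p$-valued numerator and the scalar denominator of $T$ in the contraction step, which requires applying Young's inequality to the numerator and Hölder's inequality to the denominator with exactly the pair of exponents $(d+1)/2$ and $d+1$; any other choice of $p$ would break the matching. In particular the two defining inequalities of $G_d$ correspond cleanly to the two conditions of Banach's theorem (self-map and strict contraction), and this clean dichotomy is exactly what makes the statement self-contained and model-independent. A secondary technicality is that Zachary's summability condition must be verified for the boundary law constructed only inside $B_\varepsilon$; this, however, is automatic from $l^\ast(0)=1$ and $\tilde l^\ast \in l_{(d+1)/d}$.
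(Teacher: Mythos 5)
Your overall strategy (boundary laws, Banach fixed point on a small ball, Young plus H\"older producing the pair of norms $\gamma,\delta$) is the same as the paper's, but there is a genuine quantitative gap in the way you set up the ball. You work with the boundary law $\lambda$ itself, in $l_{(d+1)/d}(\mathbb{Z}^k\setminus\{0\})$, on the ball of radius $\varepsilon=\varepsilon(\gamma,\delta)$, and your self-map estimate reads $\Vert T(\tilde l)\Vert_{(d+1)/d}\le(\delta+\gamma\varepsilon)^d$. The inequality $(\delta+\gamma\varepsilon)^d\le\varepsilon$ is \emph{not} what the defining relation $\delta+\gamma\varepsilon^d\le\varepsilon$ of $G_d$ gives, and it can fail on $G_d$: for $d=2$, $\gamma=100$, $\delta=0.0016$ one has $\varepsilon=0.002$, the pair lies in $G_2$ (indeed $4\gamma\varepsilon+4\delta\varepsilon^2\approx0.8<1$), but $(\delta+\gamma\varepsilon)^2\approx0.0406>\varepsilon$. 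For the same reason your Lipschitz bound $2d\gamma\varepsilon^{d-1}+2d\delta\varepsilon^d$ is not what your decomposition yields on that ball: the pointwise size of the ratios is controlled by $\delta+\gamma\varepsilon$, not by $\varepsilon$, and in the example above the resulting constant is of order $2d\gamma(\delta+\gamma\varepsilon)^{d-1}\gg1$. The root of the problem is a mismatch of exponents: Zachary's condition puts $\lambda$ in $l_{(d+1)/d}$, but the quantity that the $G_d$-inequalities control is $\Vert\lambda\Vert_{(d+1)/d}^{1/d}$. The paper's proof passes to the $d$-th root $x=\lambda^{1/d}$ and runs the contraction in $l_{d+1}$ on the ball of radius $\varepsilon$; then the numerator contains $|x(j)|^d$, whose $l_{(d+1)/d}$-norm equals $\Vert x\Vert_{d+1}^d\le\varepsilon^d$, so Young gives exactly $\delta+\gamma\varepsilon^d\le\varepsilon$ and the Lipschitz constant $2d(\gamma\varepsilon^{d-1}+\delta\varepsilon^d)$. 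Your formulation can be repaired equivalently by taking the ball of radius $\varepsilon^d$ (not $\varepsilon$) in $l_{(d+1)/d}$ and tracking the numerators in $l_{d+1}$, but as written the two conditions of $G_d$ do not deliver the self-map and contraction you claim.

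A secondary gap is the localization bound, which you essentially assert. The relevant ratio is $\mu_i(\sigma_0\neq i)/\mu_i(\sigma_0=i)=\sum_{j\neq0}\lambda(j)^{(d+1)/d}=\Vert x\Vert_{d+1,\mathbb{Z}^k\setminus\{0\}}^{d+1}$, not a function of $\sum_{j\neq0}Q(j)\lambda(j)$, and the two-sided estimate with the specific constants $\bigl(\delta\frac{1\mp\delta\varepsilon^d}{1\pm\gamma\varepsilon^{d-1}}\bigr)^{d+1}$ is obtained in the paper by using the fixed-point identity to bound $\Vert x-Q\Vert_{d+1}$ (via one more application of H\"older and Young) and then solving for $\Vert x\Vert_{d+1}/\delta$; your sketched intermediate bounds do not produce these constants without that extra argument.
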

\begin{rk}
	Theorem \ref{Coexthm: main} stays true if $\Z^k$ is replaced by the ring $\Z_q$ and $Q$ is an even function on $\Z_q$. Such models are called clock models (cp. \cite{PeSt99}, \cite{JaKu14}) and Theorem \ref{Coexthm: main} delivers the existence of ordered phases in this case.
\end{rk}
%Theorem 5.6.6 in \cite{D19}
\subsection{Background on the relation between  Gibbs measures and boundary laws}
In this subsection we summarize the key ingredient in constructing Gibbs measures to Markovian specifications for tree-indexed models, the notion of a boundary law. The theory presented goes back to Zachary \cite{Z83}.

\begin{defn}\label{Coexdef:bl}
	A family of functions $\{ \lambda_{xy} \}_{( x,y ) \in \vec L}$ with $\lambda_{xy} \in [0, \infty)^{{\Z}^k}$ and $\lambda_{xy}  \not \equiv 0$ is called a \textbf{boundary law} for the transfer operators $\{ Q_b\}_{b \in L}$ if 
	\begin{enumerate} 
		\item for each $( x,y ) \in \vec L$ there exists a constant  $c_{xy}>0$ such that the \textbf{boundary law equation}
		\begin{equation}\label{Coexeq: bl}
		\lambda_{xy}(\omega_x) = c_{xy} \prod_{z \in \partial x \setminus \{y \}} \sum_{\omega_z \in \mathbb{Z}^k} Q_{zx}(\omega_x,\omega_z) \lambda_{zx}(\omega_z)
		\end{equation}
		holds for every $\omega_x \in \mathbb{Z}^k$ and 
		\item for any $x \in V$ the \textbf{normalizability condition}
		\begin{equation}
		\sum_{\omega_x \in \mathbb{Z}^k} \Big( \prod_{z \in \partial x} \sum_{\omega_z \in \mathbb{Z}^k} Q_{zx}(\omega_x,\omega_z) \lambda_{zx}(\omega_z) \Big) < \infty
		\end{equation}
		holds true.
	\end{enumerate}
\end{defn}
Note that in \cite{Z83} the functions $\lambda_{xy}$ are considered as equivalence classes of families of functions, two functions being equivalent if and only if one is obtained by multiplying the other one by a suitable edge-dependent positive constant. The more explicit definition above is based on the notation used in \cite{Ge11}. Following this notation it is convenient to choose the constants in such a way that the boundary law is \textit{normalized at} $0$, i.e., $\lambda_{xy}(0)=1$ for all $( x,y ) \in \vec{L}$. 

The following result of Zachary establishes a correspondence between the set of those Gibbs measures which are also tree-indexed Markov chains (see Definition(12.2) in \cite{Ge11}) and the set of normalizable boundary laws: 
\begin{thm}[Theorem 3.2 in \cite{Z83}] \label{Coex: BLMC}
	Let $(Q_b)_{b \in L}$ be any family of transfer-operators such that there is some $\zeta \in \Omega$ with $Q_{\{x,y \}}(i, \zeta_y)>0$ for all $\{x,y\} \in L$ and any $i \in \Z^k$.
	
	Then for the Markov specification $\gamma$ associated to $(Q_b)_{b \in L}$  we have:
	\begin{enumerate}
		\item Each boundary law $(\lambda_{xy})_{( x,y ) \in \vec{L}}$ for $(Q_b)_{b \in L}$ defines a unique tree-indexed Markov chain $\mu \in \mathcal{G}(\gamma)$ with marginals
		\begin{equation}\label{Coex: BoundMC}
		\mu(\sigma_{\Lambda \cup \partial \Lambda}=\omega_{\Lambda \cup \partial \Lambda}) = (Z_\Lambda)^{-1} \prod_{y \in \partial \Lambda} \lambda_{y y_\Lambda}(\omega_y) \prod_{b \cap \Lambda \neq \emptyset} Q_b(\omega_b),
		\end{equation}
		for any connected set $\Lambda \subset \subset V$ where $y \in \partial \Lambda$, $y_\Lambda$ denotes the unique $n.n.$ of $y$ in $\Lambda$ and $Z_{\Lambda}$ is the normalization constant which turns the r.h.s. into a probability measure.
		\item Conversely, every tree-indexed Markov chain $\mu \in \mathcal{G}(\gamma)$ admits a representation of the form (\ref{Coex: BoundMC}) in terms of a boundary law (unique up to a constant positive factor).
	\end{enumerate}
\end{thm}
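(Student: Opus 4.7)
The theorem asserts a bijection, up to edge-dependent positive constants, between boundary laws for $(Q_b)_{b \in L}$ and tree-indexed Markov Gibbs measures in $\mathcal{G}(\gamma)$. I would prove the two implications separately: build the measure from the boundary law via Kolmogorov extension of the explicit marginals in one direction, and extract a boundary law from the conditional two-site marginals of a given measure in the other.

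\textbf{From boundary law to measure.} Given $(\lambda_{xy})$, I would take the right-hand side of \eqref{Coex: BoundMC} as candidate finite-volume marginals $\mu_\Lambda$, indexed by connected finite $\Lambda \subset V$. The main step is Kolmogorov consistency: for $\Lambda \subset \Lambda'$ both connected, summing $\mu_{\Lambda'}$ over the spins at $(\Lambda' \cup \partial \Lambda') \setminus (\Lambda \cup \partial \Lambda)$ must recover $\mu_\Lambda$. I would peel off extremal vertices one at a time: when I sum $\omega_y$ for a leaf $y$ of the current volume whose unique inner neighbor is $y'$, the sum $\sum_{\omega_y} \lambda_{yy'}(\omega_y) Q_{\{y,y'\}}(\omega_y,\omega_{y'})$ appears. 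Doing this for all leaves adjacent to the same $y'$, the product equals $c_{y'z}^{-1}\lambda_{y'z}(\omega_{y'})$ for $z$ the next-inward neighbor, by the boundary law equation \eqref{Coexeq: bl}; so a fresh boundary $\lambda$-factor emerges at $y'$. Iterating reduces $\mu_{\Lambda'}$ to $\mu_\Lambda$ up to normalization; Kolmogorov's theorem delivers a unique $\mu$ on $\Omega$, and finiteness of each $Z_\Lambda$ is guaranteed by the normalizability condition in Definition \ref{Coexdef:bl}(2). Since the boundary $\lambda$-factors in \eqref{Coex: BoundMC} depend only on $\omega_{\partial \Lambda}$, the conditional $\mu(\sigma_\Lambda \mid \sigma_{\partial \Lambda})$ is proportional to $\prod_{b \cap \Lambda \neq \emptyset} Q_b$, yielding $\mu \in \mathcal{G}(\gamma)$; the Markov property on the tree is automatic because $\partial \Lambda$ separates $\Lambda$ from the exterior.

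\textbf{From measure to boundary law.} Given $\mu \in \mathcal{G}(\gamma)$ that is a tree-indexed Markov chain, applying \eqref{Coex: BoundMC} with $\Lambda = \{x\}$ and marginalizing to $\{x,y\}$ suggests that for each nearest-neighbor pair the two-site marginal should factor as $\lambda_{yx}(\omega_y)\, Q_{\{x,y\}}(\omega_x,\omega_y)\, \lambda_{xy}(\omega_x)$. I would extract this by picking a reference value $a \in \Z^k$ with $\mu(\sigma_y = a) > 0$ and setting
\[
\lambda_{xy}(\omega_x) := \frac{\mu(\sigma_x = \omega_x \mid \sigma_y = a)}{Q_{\{x,y\}}(\omega_x, a)},
\]
which is well-defined by the strict positivity of $Q$ and the assumed existence of a configuration $\zeta$ giving positive weight. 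The boundary law equation \eqref{Coexeq: bl} then emerges by expanding $\mu(\sigma_x = \omega_x \mid \sigma_y = a)$ into sums over the spins at $\partial x \setminus \{y\}$ and deeper layers, invoking the Markov property of $\mu$ to factorize across the neighbors of $x$, and identifying each single-branch factor as $\lambda_{zx}(\omega_z)$ via an inductive limit over finite subtrees rooted at $x$. Normalizability of $\lambda$ is inherited from $\mu$ being a genuine probability measure.

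\textbf{Main obstacle.} The harder direction is the second one: one must prove that the decomposition $\mu(\sigma_x,\sigma_y) \propto \lambda_{yx}\,Q\,\lambda_{xy}$ is truly intrinsic --- independent of the choice of reference spin $a$ and of the approximating sequence of finite subtrees --- and that the resulting $\lambda$'s satisfy \eqref{Coexeq: bl} globally. The Markov property of $\mu$ is precisely what makes the one-edge factorization cohere across the infinite tree, the strict positivity hypothesis prevents degenerate zero factors, and finiteness of $\mu$ yields normalizability; but rigorously passing to the infinite-tree limit when collecting the $\lambda_{zx}$ factors is the most delicate step, since one is asserting that a nontrivial boundary object can be read off from a purely local conditional law.
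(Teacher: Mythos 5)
First, a point of reference: the paper does not prove this statement at all — it is quoted as Theorem 3.2 of Zachary \cite{Z83} (see also Theorem 12.12 in \cite{Ge11}) — so your proposal can only be compared with the literature proof. Your first direction is essentially that standard argument: take the right-hand side of \eqref{Coex: BoundMC} as candidate marginals, check Kolmogorov consistency by peeling off outer vertices, at which point the sums $\sum_{\omega_z}Q_{zy'}(\omega_{y'},\omega_z)\lambda_{zy'}(\omega_z)$ over the $d$ outward neighbours of an inner vertex $y'$ recombine via \eqref{Coexeq: bl} into a fresh factor $c^{-1}\lambda_{y'y''}(\omega_{y'})$, use the normalizability condition for finiteness of $Z_\Lambda$, and verify the DLR property and the Markov property from the explicit form of the marginals. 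That half is sound in outline.

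The converse direction, however, contains a genuine gap rather than just a flagged difficulty. Your definition $\lambda_{xy}(\omega_x):=\mu(\sigma_x=\omega_x\mid\sigma_y=a)/Q_{\{x,y\}}(\omega_x,a)$ is only meaningful as a candidate once one already knows that the pair marginal factorizes as $\mathrm{const}\cdot\lambda_{xy}(\omega_x)Q_{\{x,y\}}(\omega_x,\omega_y)\lambda_{yx}(\omega_y)$; that factorization (equivalently, independence of your definition from the reference value $a$ up to a positive constant) and the validity of \eqref{Coexeq: bl} \emph{are} the content of the theorem, and your sketch defers exactly these points to ``expanding into sums over deeper layers'' plus an ``inductive limit over finite subtrees''. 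That expansion does not produce the required product structure for free: writing $\mu(\sigma_x=\omega_x\mid\sigma_y=a)=\sum_{\omega_{\partial x\setminus y}}\mu(\sigma_{\partial x\setminus y}=\omega_{\partial x\setminus y}\mid\sigma_y=a)\,\gamma_{\{x\}}(\sigma_x=\omega_x\mid\omega_{\partial x\setminus y},a)$ brings in the single-site normalization $Z_{\{x\}}(\omega_{\partial x})$, which couples all boundary coordinates, so no branchwise factorization into $\lambda_{zx}$-factors appears without further argument. The actual proof in \cite{Z83} and \cite{Ge11} is purely local and runs in the opposite order: one first shows, using the Markov-chain property of $\mu$ together with the DLR equations for the volumes $\{x\}$ and $\{x\}\cup\partial x$, that the finite-volume marginals of $\mu$ admit a representation of the form \eqref{Coex: BoundMC} with \emph{some} edge-indexed functions, and then reads off that consistency of these representations between a volume and the volume enlarged by one vertex is equivalent to the boundary law equation \eqref{Coexeq: bl}; no limit over growing subtrees is taken, and uniqueness up to constants follows from comparing two such representations. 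You would also need to justify the choice of $a$ with $\mu(\sigma_y=a)>0$ and strict positivity of the relevant conditional probabilities (this is where the hypothesis on $\zeta$ enters). As written, the second half of your proposal is an accurate description of what must be proved, but not a proof of it.
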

\subsection{Setup for the fixed-point method}
In the case of tree-automorphism invariant gradient interaction potentials the boundary law equation \eqref{Coexeq: bl} simplifies to
\begin{equation} \label{eq: homBLeq}
\lambda(\cdot)=c\sum_{j \in \Z^k}Q(\cdot-j)\lambda(j)^d
\end{equation}
where $c>0$ is any constant. A short calculation then shows that in this case $\lambda$ is normalizable if and only if $\lambda \in l_{\frac{d+1}{d} }(\Z^k)$.
The normalized homogeneous boundary law equation then reads
\begin{equation} \label{eq: normhomBLeq}
\lambda(i)=  \left(\frac{Q(i)+\sum_{j \in \mathbb{Z}^k \setminus \{0\}} Q(i-j)  \vert \lambda(j) \vert}{1+\sum_{j \in \mathbb{Z}^k \setminus \{0\}}Q(j)  \vert \lambda(j) \vert   } \right)^d
\end{equation}
for $i \in \mathbb{Z}^k \setminus \{0\}$ and $\lambda(0)=1$.
Note that under the assumption $\Vert Q \Vert_{1,\Z^k}<\infty$ equation \eqref{eq: normhomBLeq} will always have the trivial solution $\lambda \equiv 1$. This solution, however, is not an element of the space $ l_{\frac{d+1}{d}} (\Z^k)$, i.e., will not lead to a finite measure.  

Going over to the $d$th root, equation \eqref{eq: normhomBLeq} is equivalent to
\begin{equation} \label{FPeq}
x(i)= \frac{Q(i)+\sum_{j \in \mathbb{Z}^k \setminus \{0\}}Q(i-j)  \, \vert x(j) \vert^d }{1+\sum_{j \in \mathbb{Z}^k \setminus \{0\}}Q(j)  \, \vert x(j) \vert^d }
\end{equation} 
for $i \in \mathbb{Z}^k \setminus \{0\}$ and $x(0)=1$.

Since a tree-automorphism invariant boundary law $\lambda=(\lambda(i))_{i \in \mathbb{Z}^k}$ is normalizable if and only if $\lambda \in l_{\frac{d+1}{d}}(\Z^k)$, the family $x$ pointwisely given by $x(i):= \lambda(i)^{\frac{1}{d}}$ corresponds to a normalizable boundary law if and only if $x \in l_{d+1}(\Z^k)$.
We want to describe the set of solutions to \eqref{FPeq} by the set of fixed points to the operator \\$T: l_{d+1}(\mathbb{Z}^k \setminus \{0\}) \rightarrow l_{d+1}(\mathbb{Z}^k \setminus \{0\})$ given by  
\begin{equation} \label{Def: Operator}
T(x)(i):=\frac{Q(i)+\sum_{j \in \mathbb{Z}^k \setminus \{0\}}Q(i-j)  \,  \vert x(j) \vert^d }{1+\sum_{j \in \mathbb{Z}^k \setminus \{0\}}Q(j)  \, \vert x(j) \vert^d }
\end{equation}
in the subset \begin{equation*}
D:=\{x \in l_{d+1}(\mathbb{Z}^k \setminus \{0\}) \mid  \, x(i)  \geq 0 \text{ for all } i \in \mathbb{Z}^k \setminus \{0\} \}.
\end{equation*}
Note that the condition $x(i) \geq 0$ for all $i \in \mathbb{Z}^k$ is automatically satisfied for any fixed-point of $T$ in $l_{d+1}( \mathbb{Z}^k \setminus \{0\})$.

First we have to verify that $T(l_{d+1}( \mathbb{Z}^k \setminus \{0\})) \subset l_{d+1}( \mathbb{Z}^k \setminus \{0\})$ i.e., that $T$ is indeed an operator from the Banach space $l_{d+1}( \mathbb{Z}^k \setminus \{0\})$ into itself. This is ensured by the following lemma.
\begin{lemma} \label{lem: ineq}
	Let $\Vert Q \Vert_{\frac{d+1}{2}, \Z^k}< \infty$. Then for any $x \in l_{d+1}(\mathbb{Z}^k)$ we have 
	\begin{equation} 
	\begin{split}
	\Vert T(x) \Vert_{d+1, \mathbb{Z}^k \setminus \{0\}} &\leq  \Vert Q \Vert_{d+1, \mathbb{Z}^k \setminus \{0\}}+\Vert Q \Vert_{\frac{d+1}{2}, \mathbb{Z}^k} \, \Vert x \Vert_{d+1,\mathbb{Z}^k \setminus \{0\}}^{d} \cr 
	%&\leq \Vert Q \Vert_{1, \mathbb{Z} \setminus \{0\}}+(1+\Vert Q \Vert_{1, \mathbb{Z} \setminus \{0\}}) \, \Vert x \Vert_{d+1,\mathbb{Z} \setminus \{0\}}^{d} \cr 
	&< \infty. 
	\end{split}
	\end{equation} 
\end{lemma}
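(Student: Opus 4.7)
The plan is to control the operator $T$ by discarding the denominator and then estimating the numerator through Young's convolution inequality, whose exponents match precisely the two norms appearing in the bound.

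First I would note that since $Q > 0$ and $|x(j)|^d \geq 0$, the denominator of $T(x)(i)$ is bounded below by $1$. This yields the pointwise estimate
\begin{equation*}
0 \leq T(x)(i) \leq Q(i) + \sum_{j \in \mathbb{Z}^k \setminus \{0\}} Q(i-j)\,|x(j)|^d.
\end{equation*}
Applying Minkowski's inequality in $l_{d+1}(\mathbb{Z}^k \setminus \{0\})$ then separates this into two contributions: the first is directly $\|Q\|_{d+1, \mathbb{Z}^k \setminus \{0\}}$, and the second is the $l_{d+1}$-norm (over $\mathbb{Z}^k\setminus\{0\}$, hence $\leq$ the norm over all of $\mathbb{Z}^k$) of the convolution $Q \ast \tilde x$, where I set $\tilde x(j) := |x(j)|^d$ for $j \neq 0$ and $\tilde x(0) := 0$.

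Next I would invoke Young's convolution inequality on $\mathbb{Z}^k$ with exponents $p = \frac{d+1}{2}$, $q = \frac{d+1}{d}$, $r = d+1$, which satisfy the required relation
\begin{equation*}
\frac{1}{p} + \frac{1}{q} = \frac{2}{d+1} + \frac{d}{d+1} = 1 + \frac{1}{d+1} = 1 + \frac{1}{r}.
\end{equation*}
This gives $\|Q \ast \tilde x\|_{d+1,\mathbb{Z}^k} \leq \|Q\|_{\frac{d+1}{2},\mathbb{Z}^k}\cdot \|\tilde x\|_{\frac{d+1}{d},\mathbb{Z}^k}$. A direct computation shows
\begin{equation*}
\|\tilde x\|_{\frac{d+1}{d},\mathbb{Z}^k}^{\frac{d+1}{d}} = \sum_{j \neq 0}|x(j)|^{d\cdot \frac{d+1}{d}} = \|x\|_{d+1,\mathbb{Z}^k\setminus\{0\}}^{d+1},
\end{equation*}
so $\|\tilde x\|_{\frac{d+1}{d},\mathbb{Z}^k} = \|x\|_{d+1,\mathbb{Z}^k\setminus\{0\}}^{d}$, yielding exactly the claimed bound. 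Finiteness is automatic from the hypothesis $\|Q\|_{\frac{d+1}{2},\mathbb{Z}^k} < \infty$ (which dominates $\|Q\|_{d+1,\mathbb{Z}^k\setminus\{0\}}$ via the elementary interpolation $\|Q\|_{d+1,\mathbb{Z}^k\setminus\{0\}} \leq \|Q\|_{\frac{d+1}{2},\mathbb{Z}^k\setminus\{0\}}^{1/2}$ since $Q < 1$ off some finite set, or more simply from the fact that $Q$ is bounded and thus dominated in larger $p$ by smaller $p$ after reduction) and $x \in l_{d+1}$.

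The only real choice in the proof is the matching of the exponents, and this is forced: one needs $dq = d+1$ in order to convert $\|\tilde x\|_q$ back into a power of $\|x\|_{d+1}$, and then Young's relation pins down $p = \frac{d+1}{2}$, which is exactly why the two norms appearing in the hypothesis of Theorem \ref{Coexthm: main} are the natural ones. I don't anticipate a real obstacle; the step that requires the most care is simply being precise about extending $\tilde x$ by zero at the origin so that the convolution is over all of $\mathbb{Z}^k$ and Young's inequality applies as stated.
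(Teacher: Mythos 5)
Your proof is correct and follows essentially the same route as the paper: bound the denominator below by $1$, split with the triangle inequality, extend $x$ by zero at the origin, and apply Young's convolution inequality with exponents $p=\tfrac{d+1}{2}$, $q=\tfrac{d+1}{d}$, $r=d+1$. The only cosmetic difference is your aside on why $\Vert Q \Vert_{d+1,\mathbb{Z}^k\setminus\{0\}}$ is finite, which is just the standard embedding $l_{\frac{d+1}{2}}\subset l_{d+1}$ and is left implicit in the paper.
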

\subsection{\texorpdfstring{A $T$-invariant set and Lipschitz-continuity}{A T-invariant set and Lipschitz-continuity}}
In this section we give a criterion based on the $\frac{d+1}{2}$-norm and the $d+1$-norm of a transfer operator $Q$ ensuring that it is a contraction mapping in a small neighborhood around zero. This will be a key ingredient in the proof of Theorem \ref{Coexthm: main} and explains the form of the good set.
\begin{pro} \label{thm: cond. Banach}
	%	Let $Q=Q_\beta$ be some transfer operator as described above depending on the parameter $\beta>0$ (the inverse temperature of the model) with $\Vert Q \Vert_1 \stackrel{\beta \downarrow 0}{\rightarrow} 0$.
	Define $\gamma:=\Vert Q \Vert_{\frac{d+1}{2}, \mathbb{Z}^k }$ and 
	$\delta:=\Vert Q \Vert_{d+1, \mathbb{Z}^k \setminus \{0\}}$. 
	
	Suppose that $\epsilon>0$ satisfies the inequality 
	\begin{equation} \label{eq: Ball}
	\delta+\gamma \epsilon^d \leq \epsilon.
	\end{equation}
	Then, for   
	the closed $\varepsilon$-ball $B_{\varepsilon}(0) \subset  l_{d+1}(\mathbb{Z}^k \setminus \{0\})$ the following holds. 
	\begin{enumerate}
		\item $T(B_{\varepsilon}(0) \cap D) \subset B_{\varepsilon}(0) \cap D$ 
		\item $T_{\vert B_{\varepsilon}(0) \cap D}$ is Lipschitz-continuous with respect to the $d+1$-norm with constant 
		\begin{equation} \label{eq: L-const}
		L=	2d \bigl( \gamma \epsilon^{d-1} + \delta\epsilon^{d}  \bigr). 
		\end{equation}
	\end{enumerate} 	
\end{pro}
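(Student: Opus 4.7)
The plan is to treat the invariance statement (1) as an immediate consequence of Lemma~\ref{lem: ineq} combined with the hypothesis on $\varepsilon$, and to establish the Lipschitz estimate (2) by splitting $T(x)-T(y)$ into a numerator-piece and a denominator-piece, estimated by Young's convolution inequality and a generalized Hölder inequality respectively. For (1), nonnegativity of $T(x)$ is automatic from $Q>0$ and $x\geq 0$, and since the denominator of $T(x)(i)$ is at least $1$, Lemma~\ref{lem: ineq} gives
\[
\|T(x)\|_{d+1,\mathbb{Z}^k\setminus\{0\}}\leq \delta+\gamma\|x\|_{d+1}^d\leq \delta+\gamma\varepsilon^d\leq \varepsilon,
\]
the last inequality being the standing assumption~\eqref{eq: Ball}.

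For (2), writing $N(x)(i):=Q(i)+\sum_{j\neq 0}Q(i-j)x(j)^d$ and $D(x):=1+\sum_{j\neq 0}Q(j)x(j)^d$, I would use the algebraic identity
\[
T(x)-T(y)=\frac{N(x)-N(y)}{D(x)}-T(y)\cdot\frac{D(x)-D(y)}{D(x)},
\]
together with $D(x)\geq 1$ and $\|T(y)\|_{d+1}\leq\varepsilon$ from (1), to reduce matters to bounding $\|N(x)-N(y)\|_{d+1}$ and $|D(x)-D(y)|$ in terms of $\|x-y\|_{d+1}$. The pointwise workhorse is the convexity bound $|u^d-v^d|\leq d(u^{d-1}+v^{d-1})|u-v|$ for $u,v\geq 0$. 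For the numerator I recognise $N(x)-N(y)=Q*f$ with $f(j)=x(j)^d-y(j)^d$ (and $f(0):=0$); Young's convolution inequality with exponents $\bigl(\tfrac{d+1}{2},\tfrac{d+1}{d},d+1\bigr)$ gives $\|Q*f\|_{d+1}\leq \gamma\|f\|_{(d+1)/d}$, and Hölder in the conjugate pair $\bigl(\tfrac{d+1}{d-1},d+1\bigr)$ applied to the pointwise bound produces $\|f\|_{(d+1)/d}\leq 2d\varepsilon^{d-1}\|x-y\|_{d+1}$. For the denominator, the generalized Hölder inequality in the reciprocal-sum-one triple $\bigl(d+1,\tfrac{d+1}{d-1},d+1\bigr)$, applied to $Q(j)$, $x(j)^{d-1}$ (resp.\ $y(j)^{d-1}$) and $|x(j)-y(j)|$, yields $|D(x)-D(y)|\leq 2d\delta\varepsilon^{d-1}\|x-y\|_{d+1}$. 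Combining these two estimates with $\|T(y)\|_{d+1}\leq\varepsilon$ produces exactly the Lipschitz constant $L=2d\bigl(\gamma\varepsilon^{d-1}+\delta\varepsilon^d\bigr)$ of \eqref{eq: L-const}.

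The main obstacle I anticipate is the exponent bookkeeping: one has to select the Young and Hölder exponents so that exactly the two $Q$-norms $\gamma=\|Q\|_{(d+1)/2,\mathbb{Z}^k}$ and $\delta=\|Q\|_{d+1,\mathbb{Z}^k\setminus\{0\}}$ from the hypothesis appear on the right-hand side, with matching powers of $\varepsilon$ and a single clean factor $\|x-y\|_{d+1}$; this is also the structural reason why the statement features these two specific $p$-norms. The second inequality in the definition of the good set $G_d$ is then precisely the contraction condition $L<1$ required for the Banach fixed-point argument used later in the proof of Theorem~\ref{Coexthm: main}.
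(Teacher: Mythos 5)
Your proposal is correct and follows essentially the same route as the paper: invariance via Lemma~\ref{lem: ineq} together with \eqref{eq: Ball}, and the Lipschitz bound via the same numerator/denominator splitting (the paper's Lemma~\ref{lem: Lipschitz}), the pointwise bound $|u^d-v^d|\leq d(u^{d-1}+v^{d-1})|u-v|$, Young's inequality with exponents $\bigl(\tfrac{d+1}{2},\tfrac{d+1}{d},d+1\bigr)$, and the generalized H\"older inequality with the same exponent triples. The only cosmetic difference is that you bound the cross term by $\Vert T(y)\Vert_{d+1}\leq\varepsilon$ whereas the paper bounds the numerator $\Vert Z(y)\Vert_{d+1}\leq\varepsilon$; both give the identical constant $L=2d(\gamma\varepsilon^{d-1}+\delta\varepsilon^{d})$.
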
	
\section{Existence of delocalized gradient Gibbs  measures} \label{sec: Existence of delocalized gradient Gibbs measures}
Existence of one \textit{Gibbs measure} 
means that the $\mathbb{Z}$ or $\mathbb{Z}^k$ symmetry is broken, so many Gibbs measures at different heights also then exist. We have studied this question in the previous section.  
\textit{Gradient Gibbs measures} never carry information about the heights, only about height differences, whether in the localized or the delocalized regime. 
In the localized regime the expected height difference between far-apart vertices remains uniformly bounded. In that case there are many different possible expected heights, all compatible with the same gradient Gibbs measure.
%They carry only information about the possibility of such information existing. 
In \textit{delocalized} regimes (which we are studying in the present section) 
the height differences between heights at far-apart vertices diverge. 
In this section we will describe a general existence theory for $q$-periodic gradient Gibbs measures which are delocalized and structurally different for different $q$. 
The simplest example of this is the free state, corresponding to $q=1$. In the free state
all increment variables are i.i.d. with single-edge distribution given by normalizing 
$Q$.  For higher periods $q$ the states become non-trivial, and can be understood 
via a two-layer construction in which a $q$-state clock model appears as an internal building block, 
compare also Remark \ref{rk: ConstrGGM} and Corollary \ref{cor: Period}.
Explicit examples will be found for two specific models in Section \ref{Coexsec: Applications}. 
All states we discuss are tree-automorphism invariant, and in particular have zero tilt. 
Existence and properties of non-tree automorphism invariant states, 
with or without tilt, poses serious new challenges for future research beyond the present work.

\subsection{Height-periodic boundary laws and their associated gradient Gibbs measures - preliminaries}
In this section we restrict to the integers $\Z$ as local state space and deal with the case of spatially homogeneous \textit{height-periodic} boundary laws, i.e.,~elements of $(0, \infty)^\Z$ satisfying the boundary law equation \eqref{eq: homBLeq} which are additionally periodic. For this, we necessarily have to assume that $Q \in l_1(\Z)$. \\Writing $\mathbb{Z}_q=\{0, \ldots, q-1\}$ for the$\mod q$ residue class ring, any $q$-periodic ($q=2,3, \ldots$) solution $\lambda_q$ to the boundary law equation \eqref{eq: homBLeq}~is obtained as a solution to the following $q-1$ dimensional system of equations 
\begin{equation*}
\lambda_q(\bar{i})=\left(\frac{\sum_{\bar{j} \in \Z_q}Q_q(\bar{i}-\bar{j}) \lambda_q(\bar{j})}{\sum_{\bar{j} \in \Z_q}Q_q(\bar{j}) \lambda_q(\bar{j})} \right)^d, \quad \bar{i} \in \Z_q,
\end{equation*}
where $Q_q(\bar{j}):=\sum_{l \in \bar{j}}Q(l)$ for all $\bar{j} \in \Z_q$ and $\lambda_q(\bar{i}):=\lambda_q(i)$ for any $i \in \bar{i}$. 
As a height-periodic boundary law is not normalizable in the sense of Definition \ref{Coexdef:bl} there is no way of constructing a Gibbs measure from it. However, it is still possible to assign a probability measure on the space of increments $\Z^{\vec{L}}$ which is a \textit{gradient Gibbs measure} in the sense that it obeys a DLR-equation with respect to the kernels \ref{CoexDef: Gibbs specification}. (cp. Thm. 4.1 in \cite{KS17} and Thm. 3.8 in \cite{HKLR19}).

First note that in the case of a $q$-periodic boundary law $\lambda_q$ the function \begin{equation*}
P_q: \Z^2 \rightarrow [0,1] \, ; \, P_q(i,j):=\frac{Q(i-j) \lambda_q(j)}{\sum_{l \in \Z}Q(i-l) \lambda_q(l)}
\end{equation*} depends only on the increment $i-j$ and the$\mod q$ value of $j$ (or $i$ equivalently) thus $P_q$ can be considered as a real function $\bar{P}_q$ on $\Z_q \times \Z$ given by 
\[\bar{P}_q(\bar{i},j-i):=P(i,j). \]
This means that it describes a $q$-periodic environment for a random walk. More precisely, the following two-step procedure is done: First fix a path on the tree and perform a random walk on $\Z_q$ along the path, which will be referred to as the \textit{induced chain}, or \textit{fuzzy chain} with transition matrix \begin{equation*}
P'_q: \Z_q^2 \rightarrow [0,1] \, ; \, P'_q(\bar{i}, \bar{j}-\bar{i}):=\sum_{l \in \bar{j}-\bar{i}}\bar{P}_q(\bar{i}, l), \end{equation*}
i.e.,
\begin{equation*}
P_q'(\bar{i}, \bar{j})=\frac{Q_q(\bar{i}-\bar{j})\lambda_q(\bar{j})}{\sum_{\bar{s} \in \Z_q}Q_q(\bar{i}-\bar{s})\lambda_q(\bar{s})}.
\end{equation*}
In the second step a random walk on the integer-valued gradient variables along the path is performed conditional on the realization of the fuzzy chain.

Conditional on that the fuzzy chain has an increment $\bar{s} \in \Z_q$ along an edge,
the marginal probability distribution of increments along this edge is the $\lambda_q$-independent measure on $\Z$
\begin{equation} \label{Coexeq: RWPEtr}
\rho^Q_{q}(j \mid \bar{s})= \chi(j \in \bar{s})\frac{Q(j)}{Q_q(\bar{s})}.
%\nu^\lambda_q(j \mid \bar{s} )=\chi(j \in \bar{s})\frac{Q(j)}{Q_q(\bar{s})}.
\end{equation}
\subsection{A two-layer construction of gradient Gibbs measures} \label{Sec: A two-layer construction of gradient Gibbs measures}
In this way we obtain the following measure on the space of gradient configurations on the tree.
Let $\alpha$ denote the stationary distribution for the fuzzy chain given by $\alpha(i)=\frac{ \lambda_q(i)^\frac{d+1}{d}}{\Vert \lambda_q \Vert_\frac{d+1}{d}^\frac{d+1}{d}  }$, for $i \in \Z_q$.
Further consider any vertex $w$ on the Cayley tree and let $P^\text{f.c.}_{w,\bar{s}}$ denote the distribution of the tree-indexed fuzzy chain $(\sigma'_x)_{x \in V}$ on $\Z_q^V$ with transition matrix $P_q'$ and conditioned on $\sigma'_w=\bar{s}$.
Then the measure $\nu^{\lambda_q}$ on the space of gradient configurations $\Z^{\vec{L}}$ has finite-volume marginals given by
\begin{equation} \label{Coexeq: ConstructionOfGGM}
\begin{split}
&\nu^{\lambda_q}(\eta_\Lambda=\zeta_\Lambda) \cr 
&=\sum_{\bar{s} \in \Z_q} \alpha(\bar{s}) \sum_{\sigma_\Lambda' \in \Z_q^\Lambda }P^{\lambda_q,\text{f.c.}}_{w,\bar{s}}(\sigma'_\Lambda) \prod_{(x,y) \in \vec{L}, \, x,y \in \Lambda}\rho^Q_{q}(\zeta_{(x,y)} \mid \sigma'_y-\sigma'_x) \cr 
&=\sum_{\sigma'_\Lambda \in \Z_q^\Lambda}P^{\lambda_q, \text{f.c.}}(\sigma'_\Lambda)\prod_{(x,y) \in \vec{L}, \, x,y \in \Lambda} \rho^Q_{q}(\zeta_{(x,y)} \mid \sigma'_y-\sigma'_x) 
\end{split}
\end{equation}  
where $\Lambda \subset V$ is any finite set and $w \in \Lambda$.  

The measure $P^{\lambda_q, \text{f.c.}}$ is exactly the distribution of the tree-indexed Markov chain on $\Z_q^V$ associated to the boundary law $\lambda_q$ by the version of Theorem \ref{Coex: BLMC} for the finite local state space $\Z_q$.
\begin{rk} \label{rk: ConstrGGM}
	Note that we obtain the gradient measure $\nu^{\lambda_q}$ by sampling first the hidden fuzzy spin variables $\sigma'$ and then the increment variables $\eta$ according to \eqref{Coexeq: RWPEtr},  conditionally independent on $\sigma'$ over all edges. As both mechanisms are tree-automorphism invariant, also the tree-automorphism invariance of the gradient measure is immediate. 
\end{rk}
\begin{rk} 
	From full tree-automorphism invariance and symmetry of the underlying potential it follows that the gradient Gibbs measure $\nu^{\lambda_q}$ has zero tilt, i.e., $E_{\nu^{\lambda_q}}[\eta_{(x,y)}]=0$ for any $(x,y) \in \vec{L}$.
\end{rk}
In the first theorem of this section we will give some criteria ensuring existence of height-periodic boundary law solutions. Afterwards we will show that the associated gradient Gibbs measures are distinct from the gradient spin projections of the localized Gibbs measures given by Theorem \ref{Coexthm: main}.
\subsection{Existence of gradient Gibbs measures}
The existence criterion for a countable family of gradient Gibbs measures indexed by $q$ involves the same good set $G_d$ as for the Gibbs measures (see Theorem \ref{Coexthm: main}).
\begin{thm} \label{thm: main2}
	Fix any integer $d \geq 2$. Let $Q \in l_1(\Z)$ be any strictly positive transfer operator with $Q(0)=1$. Then the following holds true.
	\begin{enumerate}		
		\item If $(\Vert Q \Vert_{1, \mathbb{Z}}, \Vert Q \Vert_{1, \mathbb{Z}\setminus \{0\}}) \in G_d$ then for any $q \geq 2$ there exist 
		tree-auto-morphism invariant GGMs coming from $q$-periodic boundary law solutions which are not equal to the free state. 
		\item Further set $\tilde{Q}(i):=\sup_{\vert j \vert  \geq \vert i \vert }Q(j)$ and assume that
		\begin{equation} \label{cond: DoubleSum}
		\sum_{i=1}^{\infty}\bigl(\sum_{j=1}^{\infty}\tilde{Q}(ij)\bigr)^{\frac{d+1}{2}}<\infty. 
		\end{equation}
		If $(\Vert Q \Vert_{\frac{d+1}{2}, \mathbb{Z}}, \Vert Q \Vert_{d+1, \mathbb{Z}\setminus \{0\}}) \in G_d^{o}$ (the interior of the good set) then there exists a $q_0(Q,d)$ such that for all $q\geq q_0$ there exist 
		tree-automorphism invariant GGMs coming from $q$-periodic boundary law solutions 
		which are not equal to the free state. 
	\end{enumerate}
\end{thm}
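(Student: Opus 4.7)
The plan is to reduce both parts to Theorem \ref{Coexthm: main} in its clock-model form, applied to the folded transfer operator $Q_q(\bar j) := \sum_{l \in \bar j} Q(l)$ on the finite ring $\Z_q$. A $q$-periodic boundary law on $\Z$ solving \eqref{eq: homBLeq} is exactly a boundary law on $\Z_q$ for $Q_q$, and $Q_q$ inherits positivity, evenness and $Q_q(\bar 0) \geq 1$ from $Q$. Provided the pair
\[
(\gamma_q, \delta_q) := \bigl(\Vert Q_q \Vert_{\frac{d+1}{2}, \Z_q},\; \Vert Q_q \Vert_{d+1, \Z_q \setminus \{0\}}\bigr)
\]
lies in $G_d$, the clock-model version of Theorem \ref{Coexthm: main} delivers $q$ distinct non-constant boundary laws $\lambda_q$, each localized near a different residue class. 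Feeding $\lambda_q$ into the two-layer prescription \eqref{Coexeq: ConstructionOfGGM} produces a tree-automorphism invariant GGM $\nu^{\lambda_q}$; because the free state corresponds to the constant solution $\lambda \equiv 1$ while our $\lambda_q$ is strictly localized, $\nu^{\lambda_q}$ differs from the free state.

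For part (i), the elementary inequality $\Vert a \Vert_p \leq \Vert a \Vert_1$ valid for nonnegative sequences and $p \geq 1$ (which is $\sum_i a_i^p \leq (\sum_i a_i)^p$), combined with $Q_q(\bar 0) \geq Q(0) = 1$, gives
\[
\gamma_q \leq \Vert Q_q \Vert_{1, \Z_q} = \Vert Q \Vert_{1, \Z}, \qquad \delta_q \leq \Vert Q_q \Vert_{1, \Z_q \setminus \{0\}} \leq \Vert Q \Vert_{1, \Z \setminus \{0\}}.
\]
Both defining inequalities of \eqref{eq: good set} are componentwise non-increasing in $(\gamma, \delta)$ with the same witness $\varepsilon$, so $G_d$ is downward closed, and hence $(\gamma_q, \delta_q) \in G_d$ for every $q \geq 2$. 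Combined with the first paragraph this closes (i).

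For part (ii) I run a continuity argument. Choosing symmetric representatives $i \in (-q/2, q/2]$ one writes $Q_q(\bar i) = Q(i) + A_q(i)$ with aliasing contribution $A_q(i) = \sum_{k \neq 0} Q(i+kq)$, uniformly bounded by $2 \sum_{k \geq 1} \tilde Q(kq/2)$ for $|i| \leq q/2$. The hypothesis \eqref{cond: DoubleSum} states precisely that the monotone-decreasing sequence $a_m := \sum_{j \geq 1} \tilde Q(jm)$ lies in $l^{(d+1)/2}$; in particular $\tilde Q \in l^1$ (take $m = 1$), and reindexing the folded sum yields $\sum_{i} A_q(i) = \sum_{|m|>q/2} Q(m) \to 0$ as $q \to \infty$. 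Combining the mean-value bound $|a^p-b^p| \leq p|a-b|\max(a,b)^{p-1}$ with Hölder's inequality (against $\sum_i A_q(i)^p \leq (\sum_i A_q(i))^p$) and the tail $\sum_{|i|>q/2} Q(i)^p \to 0$, one deduces
\[
(\gamma_q, \delta_q) \longrightarrow \bigl(\Vert Q \Vert_{\frac{d+1}{2}, \Z},\; \Vert Q \Vert_{d+1, \Z \setminus \{0\}}\bigr) \in G_d^{o}
\]
for the two relevant exponents $p = \tfrac{d+1}{2}$ and $p = d+1$. Openness of $G_d^{o}$ furnishes $q_0 = q_0(Q,d)$ with $(\gamma_q, \delta_q) \in G_d$ for all $q \geq q_0$, and the opening paragraph concludes. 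The main technical hurdle is this last convergence step: one must check carefully that the single summability hypothesis \eqref{cond: DoubleSum}, rather than just $Q \in l^1$, is strong enough to make the aliasing error vanish simultaneously in both the $\tfrac{d+1}{2}$-norm on $\Z_q$ and the $(d+1)$-norm on $\Z_q \setminus \{0\}$ with explicit rates in $q$.
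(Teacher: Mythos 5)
Your argument is correct, and its skeleton is the paper's: fold $Q$ into $Q_q$ on $\Z_q$, identify $q$-periodic boundary laws with clock-model boundary laws, normalize by $Q_q(\bar 0)\ge 1$ and use that $G_d$ is downward closed, and settle part (i) via $\Vert Q_q\Vert_{p,\Z_q}\le \Vert Q\Vert_{1,\Z}$ --- that is essentially verbatim the published proof of part (i). The genuine difference is how you obtain $(\gamma_q,\delta_q)\to(\gamma,\delta)$ in part (ii). The paper fixes $p\in\{\tfrac{d+1}{2},\,d+1\}$, regards $\bar i\mapsto Q_q(\bar i)^p$ as a sequence $f_q$ over symmetric representatives, and applies dominated convergence after building a $q$-uniform pointwise majorant from the monotone envelope $\tilde Q$; integrability of that majorant is precisely condition \eqref{cond: DoubleSum}, which is the sole reason this hypothesis enters. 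You instead decompose $Q_q(\bar i)=Q(i)+A_q(i)$ for representatives $i\in(-q/2,q/2]$ and control the aliasing term directly: $\sup_i A_q(i)\le \sum_{|m|\ge q/2}Q(m)$ and $\sum_{i\in(-q/2,q/2]}A_q(i)=\sum_{m\notin(-q/2,q/2]}Q(m)$, so either your mean-value bound combined with an $l^\infty$--$l^1$ split, or simply Minkowski together with $\Vert A_q\Vert_{p}\le\Vert A_q\Vert_{1}$, yields $\Vert Q_q\Vert_{p,\Z_q}\to\Vert Q\Vert_{p,\Z}$, and the $\Z_q\setminus\{\bar 0\}$ norms follow since $Q_q(\bar 0)\to Q(0)$. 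Consequently the ``hurdle'' you flag at the end is not one: your estimate closes using only $Q\in l_1(\Z)$ (boundedness of $Q$ and vanishing tails), and never needs \eqref{cond: DoubleSum}; that condition is an artefact of demanding a pointwise majorant valid for all $i$ and $q$ simultaneously. What the paper's method buys is a soft appeal to a standard convergence theorem; what yours buys is an explicit rate (the error is of the order of the tail mass of $Q$ beyond $q/2$) and, in effect, the conclusion under a formally weaker hypothesis. Two small points you should still state explicitly, though they are handled at the same level of detail as in the paper: $\gamma_q>1$ and $\delta_q>0$ (so that $(\gamma_q,\delta_q)$ lies in the domain $(1,\infty)\times(0,\infty)$ of $G_d$), which follow from strict positivity of $Q$; and distinctness from the free state is read off from the fixed point lying in a ball of radius $\varepsilon<1$, so the resulting boundary law is non-constant, and Theorem \ref{thm: ident} (or localization around distinct residues) separates the associated measures.
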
  
\begin{rk} \label{rk: ConcSum}
	For the SOS model $Q(i):=\exp(-\beta \vert i \vert)$, condition \ref{cond: DoubleSum} is satisfied at any $\beta>0$. In the case of the logarithmic potential $Q(i):=\frac{1}{(1+ \vert i \vert)^\beta}$ we have $Q \notin l_1(\Z)$ if $\beta \leq 1$. On the other hand, for $\beta>1$ even condition \ref{cond: DoubleSum} is satisfied.
\end{rk}
\subsection{Localization vs. delocalization}
Localized Gibbs measures and delocalized gradient Gibbs measures can be distinguished by samples along paths, as the following theorem states.
\begin{thm} \label{thm: deloc}
	If $\lambda_q$ is a $q$-periodic boundary law solution for $Q$ then the gradient Gibbs measure $\nu_q$ associated to it via \eqref{Coexeq: ConstructionOfGGM} is different from the projection of the localized Gibbs measures given by Theorem \ref{Coexthm: main}.
	More precisely, the former one \textit{delocalizes} in the sense that $\nu^{\lambda_q}(W_n=k) \stackrel{n \rightarrow \infty}{\rightarrow} 0$ for any total increment $W_n$ along a path of length $n$ and any $k \in \Z$. 
	
	On the other hand, let $\nu$ be the projection to the gradient space of any of the Gibbs measures whose existence is guaranteed by Theorem \ref{Coexthm: main}. Then $\nu$ is localized in the sense that for all $k \in \Z$
	the probability $\nu(W_n=k)$ has a strictly positive limit as $n$ tends to infinity (see \eqref{eq: Localization}).
\end{thm}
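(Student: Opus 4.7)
The plan is to treat both claims through the Markov-chain structure of path marginals, using the two-layer representation \eqref{Coexeq: ConstructionOfGGM} in the gradient case and the normalizable boundary law $\lambda$ from Theorem \ref{Coexthm: main} in the Gibbs case. Fix a semi-infinite path $(x_0,x_1,\dots)$ on the tree and set $\eta_j:=\eta_{(x_{j-1},x_j)}$, so that $W_n=\sum_{j=1}^n\eta_j$.

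For the delocalization of $\nu^{\lambda_q}$, I would condition on the fuzzy-chain trajectory $(\sigma'_{x_j})_{0\leq j\leq n}$. By \eqref{Coexeq: RWPEtr} the $\eta_j$'s are then conditionally independent with $\eta_j\in\bar s_j+q\Z$ and $P(\eta_j=\bar s_j+qm\mid\sigma')=Q(\bar s_j+qm)/Q_q(\bar s_j)$, where $\bar s_j:=\sigma'_{x_j}-\sigma'_{x_{j-1}}\in\Z_q$ is identified with its representative in $\{0,\dots,q-1\}$. Strict positivity of $Q$ on $\Z$ makes the distribution $m\mapsto Q(a+qm)/Q_q(a)$ have full support on $\Z$ for every $a$, so its characteristic function
\[
\psi_a(s):=\sum_{m\in\Z}\frac{Q(a+qm)}{Q_q(a)}e^{ism}
\]
satisfies $|\psi_a(s)|<1$ for $s\in(0,2\pi)$, and finiteness of $\Z_q$ yields $c(s):=\max_{a\in\Z_q}|\psi_a(s)|<1$ there. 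A direct computation gives $E[e^{it\eta_j}\mid\sigma']=e^{it\bar s_j}\psi_{\bar s_j}(qt)$, whence
\[
|E_{\nu^{\lambda_q}}[e^{itW_n}]|\leq E\Bigl[\prod_{j=1}^n|\psi_{\bar s_j}(qt)|\Bigr]\leq c(qt)^n,
\]
which vanishes for every $t\in(0,2\pi)$ with $qt\notin 2\pi\Z$, i.e.\ for Lebesgue-almost every $t$. Combining Fourier inversion $\nu^{\lambda_q}(W_n=k)=(2\pi)^{-1}\int_0^{2\pi}e^{-itk}E_{\nu^{\lambda_q}}[e^{itW_n}]\,dt$ with dominated convergence (integrand bounded by $1$) then delivers $\nu^{\lambda_q}(W_n=k)\to 0$.

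For the localization part, Theorem \ref{Coex: BLMC} identifies the projection of $\mu_i$ onto the path with a $\Z$-valued Markov chain with transitions $P(i,j)=Q(i-j)\lambda(j)/\sum_k Q(i-k)\lambda(k)$ and invariant probability $\pi(i)\propto\lambda(i)^{(d+1)/d}$; reversibility with respect to $\pi$ is an immediate consequence of \eqref{eq: homBLeq}. Strict positivity of $Q$ renders this chain irreducible and aperiodic, and summability of $\pi$ (amounting to $\lambda\in l_{\frac{d+1}{d}}(\Z)$, built into normalizability of the boundary law) renders it positive-recurrent, so the classical convergence theorem for Markov chains yields $P^n(i,j)\to\pi(j)$ pointwise. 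Since $W_n=\sigma_{x_n}-\sigma_{x_0}$ under $\mu_i$, conditioning on $\sigma_{x_0}$ gives
\[
\nu(W_n=k)=\sum_{i\in\Z}\pi(i)P^n(i,i+k)\xrightarrow{n\to\infty}\sum_{i\in\Z}\pi(i)\pi(i+k)
\]
by dominated convergence (the summands are bounded by the summable sequence $\pi(i)$), and the limit is strictly positive because $\pi(i)>0$ for every $i$.

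The main technical point is the pointwise decay of the conditional characteristic function in Part~1, which rests on strict positivity of $Q$ (making each $\psi_a$ strictly contractive away from the origin) combined with finiteness of $\Z_q$ (yielding the uniform-in-conditioning bound $c(s)<1$). Once this is in hand, interchanging limits with integration and with summation is routine dominated convergence.
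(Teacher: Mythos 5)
Your proposal is correct and follows essentially the same route as the paper's proof: condition on the fuzzy chain, factorize the conditional characteristic function of $W_n$, bound it by the $n$-th power of the largest per-residue-class characteristic-function modulus, observe this is strictly less than $1$ for Lebesgue-a.e.\ $t$, and conclude via Fourier inversion and dominated convergence, while the localized part uses positive recurrence and the ergodic convergence $P^n(i,j)\to\pi(j)$ of the path chain exactly as in the paper. The only (harmless) deviation is that where the paper invokes a general fact about non-Dirac laws having characteristic function of modulus strictly below $1$ almost everywhere, you argue directly that each conditional increment law is a full-support lattice law of span one, which yields the slightly more explicit uniform bound $c(qt)^n$.
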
	
This shows that both types of measures behave fundamentally different. 
\subsection{Identifiability via boundary laws}
Do different boundary laws really define different gradient measures?
The following theorem positively answers this question.

\begin{thm} \label{thm: ident}
	Let $Q$ be any symmetric transfer operator for some gradient interaction potential and let $\lambda_{q_1}$ and $\lambda_{q_2}$ be two spatially homogeneous height-periodic boundary laws for $Q$ with minimal periods $q_1$ and $q_2$, respectively.
	Then the following holds true for the associated gradient Gibbs measures:
	
	If $\nu^{\lambda_{q_1}}=\nu^{\lambda_{q_2}}$ then $q_1=q_2$ and there are some cyclic permutation $\rho \in S_{q_1}$ and some constant $c>0$ such that $\lambda_{q_2}=c \, \lambda_{q_1} \circ \rho$.
\end{thm}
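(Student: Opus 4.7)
The plan is to recover the boundary law from $\nu^{\lambda_q}$ up to the allowed symmetries by (i) extracting the fuzzy Markov chain on $\mathbb{Z}_q^V$ (pinned at a vertex $o$) directly from the gradient data, (ii) using ergodicity of the tree-indexed Markov chain to lift this ``pinned'' chain back to the stationary fuzzy chain modulo a cyclic relabelling of states, and (iii) invoking the uniqueness part of Zachary's theorem (Theorem~\ref{Coex: BLMC}(2)) applied to the finite local spin space $\mathbb{Z}_q$ to pass back to the boundary law.

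Concretely, set $q:=\mathrm{lcm}(q_1,q_2)$. Each $\lambda_{q_j}$ descends to a boundary law $\tilde\lambda_j$ on $\mathbb{Z}_q$ with respect to the induced transfer operator $Q_q(\bar s)=\sum_{l\in\bar s}Q(l)$. Fix a root $o\in V$ and, for $\eta\in\Z^{\vec L}$, set $\tilde\sigma_x(\eta):=\sum_{e\in\mathrm{path}(o,x)}\eta_e \bmod q$. Because the kernel $\rho^Q_q(\cdot\mid\bar s)$ in \eqref{Coexeq: RWPEtr} is supported on the residue class $\bar s$, the construction \eqref{Coexeq: ConstructionOfGGM} makes the identity $\eta_{(x,y)}\equiv\sigma'_y-\sigma'_x\pmod q$ hold $\nu^{\lambda_{q_j}}$-almost surely, and hence $\tilde\sigma_x=\sigma'_x-\sigma'_o$ a.s. Consequently the pushforward $(\tilde\sigma_\cdot)_*\nu^{\lambda_{q_j}}$ coincides with the law of the ``pinned at $o$'' difference process of the fuzzy chain $P^{\tilde\lambda_j,\mathrm{f.c.}}$. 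Under the hypothesis $\nu^{\lambda_{q_1}}=\nu^{\lambda_{q_2}}$, these two difference laws on $\mathbb{Z}_q^V$ agree.

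Since $Q_q>0$, the transitions $P'_q$ are irreducible and the tree-indexed chains $P^{\tilde\lambda_j,\mathrm{f.c.}}$ are extremal among tree-automorphism invariant Gibbs measures for the induced Markov specification on $\mathbb{Z}_q^V$. The cyclic-shift action of $\mathbb{Z}_q$ on state labels preserves the difference process, so extremality forces the two chains to be related by a cyclic permutation: $P^{\tilde\lambda_1,\mathrm{f.c.}}=P^{\tilde\lambda_2,\mathrm{f.c.}}\circ\rho$ for some $\rho(\bar j)=\bar j-\bar k$. Applying Theorem~\ref{Coex: BLMC}(2) to each tree chain on $\mathbb{Z}_q$ with transfer $Q_q$ then yields $\tilde\lambda_1=c\cdot\tilde\lambda_2\circ\rho$ on $\mathbb{Z}_q$ for some $c>0$. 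Cyclic permutation and scalar multiplication preserve the minimal period of a function on $\mathbb{Z}_q$, and $\tilde\lambda_j$ has minimal period $q_j$ on $\mathbb{Z}_q$ (since $q_j\mid q$), so one concludes $q_1=q_2$, $\rho\in S_{q_1}$, and $\lambda_{q_2}=c\,\lambda_{q_1}\circ\rho$.

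The hard part will be the extremality/ergodicity step: making precise that equality of the ``pinned at $o$'' difference chains really forces the underlying stationary fuzzy chains to agree up to a single cyclic relabelling. The cleanest route is via extremality of $P^{\tilde\lambda_j,\mathrm{f.c.}}$ in the simplex of tree-automorphism invariant Gibbs measures for the $\mathbb{Z}_q$-Markov specification built from $Q_q$ and $\tilde\lambda_j$, combined with the triviality of the $\mathbb{Z}_q$-stabilizer of the orbit, which in turn uses the minimal-period assumption on $\tilde\lambda_j$ to rule out degenerate shift-invariances. Secondary technical points are the irreducibility of $P'_q$ (immediate from $Q_q>0$) and the measurability of the path-sum map $\tilde\sigma$, both of which are routine on the tree.
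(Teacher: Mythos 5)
Your steps (i) and (iii) are sound and essentially match the paper: the mod-$q$ increments of the gradient field coincide $\nu^{\lambda_q}$-a.s.\ with the increments of the fuzzy chain, and once the two fuzzy descriptions are known to agree up to one global cyclic shift, passing to the boundary laws and running the minimal-period/lcm argument finishes the proof. The gap is precisely where you flag it: step (ii). You need that equality of the pinned difference laws forces $P^{\tilde\lambda_1,\mathrm{f.c.}}$ and $P^{\tilde\lambda_2,\mathrm{f.c.}}$ to coincide after a cyclic relabelling, and you propose to extract this from extremality of these tree-indexed chains in the simplex of tree-automorphism invariant Gibbs measures for the $\Z_q$-specification built from $Q_q$. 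That extremality is neither proved nor available under the theorem's hypotheses, which assume nothing about $Q$ beyond symmetry. Splitting Gibbs measures coming from boundary laws on trees are in general \emph{not} extremal, and not even extremal within the automorphism-invariant simplex (for instance the free Ising state below the Kesten--Stigum threshold supports a nontrivial automorphism-invariant tail observable), and nothing in the hypotheses rules out analogous behaviour for the fuzzy chains here. Without extremality, what equality of the difference laws actually gives you --- equality of the $\Z_q$-symmetrized mixtures $\tfrac1q\sum_{\bar k}\mu_j(\cdot-\bar k)$ --- does not allow you to match the components of the two mixtures. And even granting extremality, you would still need to spell out that the difference law determines this symmetrized mixture and then invoke uniqueness of the extremal decomposition together with your stabilizer argument; the proposal only gestures at this.

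The paper replaces your global step (ii) by a strictly one-dimensional argument requiring no extremality on the tree: restrict to a single semi-infinite path, along which the fuzzy chain is an ordinary irreducible Markov chain on the finite state space $\Z_q$ with stationary distribution $\alpha_q\propto\lambda_q^{\frac{d+1}{d}}$. The ergodic theorem (law of large numbers for occupation frequencies) shows that the empirical distribution of the mod-$q$ heights along the path, started from an arbitrary reference value, converges almost surely to $\alpha_q$ shifted by an unknown random offset. Since this empirical distribution is a functional of the gradient data alone, $\nu^{\lambda_{q_1}}=\nu^{\lambda_{q_2}}$ identifies $\alpha_{q_1}$ and $\alpha_{q_2}$ (read on $\Z_{\mathrm{lcm}(q_1,q_2)}$) up to a cyclic shift and a positive constant, hence the boundary laws as well, after which minimality of the periods gives $q_1=q_2$. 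To repair your argument, either prove the required triviality on the automorphism-invariant sigma-field for these fuzzy chains (a substantial and, in this generality, doubtful claim), or substitute the path-wise ergodic argument for step (ii).
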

\begin{cor} \label{cor: Period} 
	Let $q \geq 2$ and $\lambda_q$ be some $q$-height-periodic boundary law for a transfer operator $Q$.		
	Let $\Delta^q \subset \mathbb{R}^q$ denote the $q-1$-dimensional standard simplex and consider $u,v \in \Delta^q$ as equivalent iff $u=v \circ \rho$ for some cyclic permutation of coordinates $\rho$. 
	Consider any infinite path of edges $b_1,b_2,\ldots$ and the height-field along this path defined by prescribing  a fixed height at some vertex of the path. Then for any $\tilde{q} \in \{2,3, \ldots\}$ the empirical distribution of the mod-$\tilde{q}$ projections of heights along the path almost surely converges to a deterministic limit $(\alpha_{\tilde{q}}(0), \ldots, \alpha_{\tilde{q}}(\tilde{q}-1))  \in \Delta^{\tilde{q}}/\sim$.   	
	The minimal period $q$ of the underlying boundary law can be recovered as the greatest common divisor of all $\tilde{q} \in \{2,3, \ldots\}$ with the property that the associated tentative boundary law $\tilde{\lambda}_{\tilde{q}}=(\alpha_{\tilde{q}}(0)^\frac{d}{d+1}, \ldots, \alpha_{\tilde{q}}(\tilde{q}-1)^\frac{d}{d+1})$ is indeed a $\tilde{q}$-periodic boundary law for $Q$ and $\nu^{\tilde{\lambda}_{\tilde{q}}}=\nu^{\lambda_{q}}$.
\end{cor}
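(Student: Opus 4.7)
The plan is to combine the ergodic theorem for the two-layer construction of Subsection \ref{Sec: A two-layer construction of gradient Gibbs measures} with the identifiability statement of Theorem \ref{thm: ident}, in order to show that the periods $\tilde q$ for which the tentative law $\tilde \lambda_{\tilde q}$ is a bona fide boundary law inducing $\nu^{\lambda_q}$ are exactly the multiples of $q$.

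\textbf{Step 1: Almost-sure convergence of empirical mod-$\tilde q$ distributions.} Enumerate the path vertices as $v_0, v_1, \ldots$ and write $h_n$ for the height at $v_n$ with prescribed value $h_0$. By the two-layer construction, $\sigma'_n := h_n \bmod q$ is an irreducible Markov chain on $\Z_q$ with transition matrix $P_q'$ and stationary distribution $\alpha$, and, conditionally on $\sigma'$, the increments $h_n - h_{n-1}$ are independent with law $\rho^Q_q(\cdot\mid \sigma'_n-\sigma'_{n-1})$. For any $\tilde q\geq 2$, the joint process $M_n := (\sigma'_n, h_n \bmod \tilde q)$ is a Markov chain on the finite set $\Z_q\times\Z_{\tilde q}$; irreducibility of $P_q'$ together with strict positivity of $Q$ (invoking CRT to find a common increment realising any admissible transition) yields ergodic behaviour, and the classical ergodic theorem provides a deterministic almost-sure limit
\[\alpha_{\tilde q}(j) \;=\; \lim_{n\to\infty}\frac{1}{n}\#\bigl\{0\leq k\leq n : h_k \equiv j \!\!\pmod{\tilde q}\bigr\},\qquad j\in \Z_{\tilde q}.\]
A shift of the prescribed height by $c$ cyclically permutes the coordinates of $\alpha_{\tilde q}$, so the limit is canonically defined as an element of $\Delta^{\tilde q}/{\sim}$.

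\textbf{Step 2: Characterising the admissible $\tilde q$.} Set $\tilde \lambda_{\tilde q}(\bar i) := \alpha_{\tilde q}(\bar i)^{d/(d+1)}$. I would show that $\tilde \lambda_{\tilde q}$ is a $\tilde q$-periodic boundary law for $Q$ with $\nu^{\tilde\lambda_{\tilde q}} = \nu^{\lambda_q}$ if and only if $q \mid \tilde q$. For the \emph{if}-direction, the periodic extension $\lambda^{(\tilde q)}(\bar i) := \lambda_q(\bar i\bmod q)$ satisfies the period-$\tilde q$ boundary law equation by a direct computation in which the $\Z_{\tilde q}$-sum $\sum_{\bar j} Q_{\tilde q}(\bar i-\bar j)\lambda^{(\tilde q)}(\bar j)$ collapses onto the $\Z_q$-sum $\sum_{\bar j_1}Q_q(\bar i-\bar j_1)\lambda_q(\bar j_1)$; the two-layer construction applied to $\lambda^{(\tilde q)}$ reproduces $\nu^{\lambda_q}$ because the increment kernel $\rho^Q_q$ depends on the fuzzy trajectory only through its $\Z_q$-increments. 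Identifying the stationary distribution of the lifted fuzzy chain on $\Z_{\tilde q}$ with $\alpha_{\tilde q}$ then shows that $\tilde\lambda_{\tilde q}$ equals $\lambda^{(\tilde q)}$ modulo a positive scalar and cyclic relabelling. For the \emph{only-if}-direction, if $\tilde\lambda_{\tilde q}$ is a genuine $\tilde q$-periodic boundary law with $\nu^{\tilde\lambda_{\tilde q}} = \nu^{\lambda_q}$, Theorem \ref{thm: ident} forces the minimal periods of the two boundary laws to coincide; since the minimal period of $\tilde\lambda_{\tilde q}$ divides $\tilde q$, one obtains $q \mid \tilde q$.

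\textbf{Conclusion and main obstacle.} Combining the two directions, the set of admissible $\tilde q$ is exactly $\{q, 2q, 3q, \ldots\}$, whose greatest common divisor equals $q$, giving the claimed identification. The principal subtlety lies in the \emph{if}-direction of Step 2: one must both verify the boundary law equation for the periodic lift (a careful but straightforward reduction modulo $q$) and, more importantly, argue that the GGM associated to $\lambda^{(\tilde q)}$ agrees with that of $\lambda_q$. The latter rests on the observation that the increment kernel $\rho^Q_q$ sees the fuzzy trajectory only via its $\Z_q$-increments, so the $\tilde q/q$-sheeted lift of the fuzzy chain projects back to the same gradient distribution once the cyclic-shift ambiguity in the starting height is absorbed into the quotient $\sim$.
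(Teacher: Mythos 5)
Your proposal is correct and follows essentially the same route as the paper's proof: almost-sure convergence of the empirical mod-$\tilde q$ distributions via the ergodic theorem for the fuzzy chain lifted to level $\mathrm{lcm}(q,\tilde q)$ (your product chain with the CRT remark is the paper's mod-$t$ chain with the partition \eqref{eq: Partition}), followed by the characterization that the admissible $\tilde q$ are exactly the multiples of $q$, using the periodic lift of $\lambda_q$ for the ``if'' direction and Theorem \ref{thm: ident} for the ``only if'' direction, so the gcd equals $q$. The only cosmetic difference is that you invoke Theorem \ref{thm: ident} at the minimal periods (after noting the construction collapses) while the paper compares $\tilde\lambda_{\tilde q}$ with the $t$-periodic continuation $\lambda_t$ at the lcm level, and you make explicit the lifting compatibility $\nu^{\lambda^{(\tilde q)}}=\nu^{\lambda_q}$ that the paper leaves implicit.
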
 

\section{Applications} \label{Coexsec: Applications}
Theorems \ref{Coexthm: main} and \ref{thm: main2} state existence of (gradient) Gibbs measures if a pair of certain $p$-norms of the transfer operator $Q$ lies in the so called good set $G_d$ \eqref{eq: good set}. To understand this good set better, we will look first at the extreme cases of the binary tree and trees of 
large degrees, still  for general potentials. 
Then, we will treat in more detail the 
SOS model (with exponentially fast decay of $Q$) 
and the log-potential (with polynomially slow decay of $Q$)
on general trees, where we discuss coexistence and non-coexistence of localized Gibbs measures 
and delocalized gradient Gibbs measures.  

\subsection{Binary tree}
In the case of the binary tree, the good set can be explicitly described by the hypograph of a function pointwise given by a root of a polynomial equation of order four.
\begin{pro} \label{pro: GoodBinary}
	Consider the binary tree. Then the good set $G_2 \subset (1, \infty) \times (0, \infty)$ is bounded by the graph of the function $\delta: (1, \infty) \rightarrow (0, \infty)$ defined by 
	\begin{equation*}
	\begin{split}
	\delta(\gamma) &:=
	\frac{1}{2}\sqrt{2\frac{\gamma^3-\frac{1}{4}}{\sqrt{(\gamma^3+\frac{1}{4})^\frac{2}{3}-\gamma}}-(\gamma^3+\frac{1}{4})^\frac{2}{3}-2\gamma}-\frac{1}{2}\sqrt{(\gamma^3+\frac{1}{4})^\frac{2}{3}-\gamma} \cr
	& = \frac{3}{16}\gamma^{-1}+O(\gamma^{-4}).
	\end{split}
	\end{equation*}
\end{pro}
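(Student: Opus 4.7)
I will identify the upper boundary of $G_2$ as the locus where both defining inequalities become equalities simultaneously at some critical $\varepsilon>0$, then obtain $\delta(\gamma)$ by Ferrari's method. First, because both defining inequalities of $G_2$ are increasing in $\delta$, $G_2$ is downward closed in $\delta$; hence $G_2\cap(\{\gamma\}\times(0,\infty))=(0,\delta(\gamma))$ for a function $\delta(\gamma)$, and on the boundary there exists a critical $\varepsilon^*>0$ such that
\begin{equation*}
\delta+\gamma\varepsilon^{2}=\varepsilon\qquad\text{and}\qquad 4\gamma\varepsilon+4\delta\varepsilon^{2}=1
\end{equation*}
both hold with equality (the $\varepsilon^*$ is the intersection of the two upper envelopes $\varepsilon-\gamma\varepsilon^{2}$ and $(1-4\gamma\varepsilon)/(4\varepsilon^{2})$ of feasible $\delta$ as a function of $\varepsilon$).

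Next I eliminate $\varepsilon$. Equating the two expressions $\varepsilon^{2}=(\varepsilon-\delta)/\gamma=(1-4\gamma\varepsilon)/(4\delta)$ gives $\varepsilon=(\gamma+4\delta^{2})/(4(\gamma^{2}+\delta))$, and resubstituting into $\gamma\varepsilon^{2}-\varepsilon+\delta=0$ produces the depressed quartic
\begin{equation*}
16\delta^{4}+24\gamma\delta^{2}+4(4\gamma^{3}-1)\delta-3\gamma^{2}=0.
\end{equation*}
Applying Ferrari's method, the resolvent cubic $y^{3}+3\gamma y^{2}+3\gamma^{2}y-(4\gamma^{3}-1)^{2}/16=0$ becomes, after the shift $y\mapsto y-\gamma$, the equation $(y+\gamma)^{3}=\gamma^{3}+(4\gamma^{3}-1)^{2}/16$. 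The key algebraic observation is the identity $\gamma^{3}+(4\gamma^{3}-1)^{2}/16=(\gamma^{3}+1/4)^{2}$, immediate from expansion, which collapses the resolvent to a single cube root and produces the unique real root $Y=(\gamma^{3}+1/4)^{2/3}-\gamma$.

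With $Y$ in hand, Ferrari's factorization gives $(\delta^{2}-\sqrt{Y}\,\delta+c_{1})(\delta^{2}+\sqrt{Y}\,\delta+c_{2})=0$ with $c_{1,2}=\frac{1}{2}(Y+\frac{3\gamma}{2})\pm\frac{4\gamma^{3}-1}{8\sqrt{Y}}$. Tracking discriminant signs for $\gamma>1$ shows that only the factor $\delta^{2}+\sqrt{Y}\,\delta+c_{2}$ has real roots and exactly one is positive, yielding $\delta(\gamma)=\frac{1}{2}\sqrt{Y-4c_{2}}-\frac{1}{2}\sqrt{Y}$; substituting $Y-4c_{2}=2(\gamma^{3}-1/4)/\sqrt{Y}-(\gamma^{3}+1/4)^{2/3}-2\gamma$ reproduces the proposition's formula verbatim. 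The asymptotic $\delta(\gamma)=3\gamma^{-1}/16+O(\gamma^{-4})$ follows from the expansion $(\gamma^{3}+1/4)^{2/3}=\gamma^{2}+1/(6\gamma)+O(\gamma^{-4})$ propagated through the two nested square roots. The main obstacle is really pinpointing the identity $\gamma^{3}+(4\gamma^{3}-1)^{2}/16=(\gamma^{3}+1/4)^{2}$; without it the resolvent cubic would remain irreducible and the closed form would inevitably involve nested cube roots rather than the clean nested square roots of the proposition. Everything else — the downward closedness, the elimination of $\varepsilon$, the discriminant signs selecting the correct quadratic factor, and the elementary asymptotic — is then routine bookkeeping.
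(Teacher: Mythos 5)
Your proof is correct and lands on exactly the same quartic boundary equation as the paper ($16\delta^4+24\gamma\delta^2+(16\gamma^3-4)\delta-3\gamma^2=0$ after dividing the paper's version by $\gamma^2$), but you reach it and solve it by a somewhat different route. The paper solves the first constraint for the minimal positive $\varepsilon$ explicitly, $\varepsilon=\frac{1}{2\gamma}(1-\sqrt{1-4\gamma\delta})$ (noting this requires $\delta\leq\frac{1}{4\gamma}$), substitutes it into the Lipschitz condition, and then squares the resulting inequality in $a=\sqrt{1-4\gamma\delta}$ to obtain the quartic, afterwards checking $0<\delta(\gamma)\leq\frac{1}{4\gamma}$; you instead argue via downward closedness in $\delta$ and the crossing of the two envelopes $\varepsilon-\gamma\varepsilon^2$ and $(1-4\gamma\varepsilon)/(4\varepsilon^2)$ that the boundary is where both conditions are equalities, and eliminate $\varepsilon$ polynomially, which avoids the square root and the separate range check. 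The bigger difference is downstream: the paper obtains the closed-form positive root and its expansion by MATHEMATICA, whereas you derive it by Ferrari's method, with the key observation $\gamma^3+(4\gamma^3-1)^2/16=(\gamma^3+1/4)^2$ collapsing the resolvent cubic, and then select the correct quadratic factor by the discriminant/sign argument (indeed $Y>0$ for $\gamma>1$ forces the other factor to be root-free, and $c_1c_2=-3\gamma^2/16<0$ gives exactly one positive root). This buys a fully hand-verifiable derivation of the stated formula; the paper's route is shorter but delegates the quartic to computer algebra. The only points you leave implicit — existence and uniqueness of the envelope crossing in $(0,\frac{1}{4\gamma})$, and that the unique positive root of the quartic is indeed the crossing value rather than a spurious root of the elimination — are routine given the monotonicity of the two envelopes, so there is no gap of substance.
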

\begin{rk}
	$\delta(\gamma)$ is the unique positive root to the equation
	\begin{equation*}
	16\gamma^2\delta^4+24\gamma^3\delta^2+(16\gamma^5-4\gamma^2)\delta-3\gamma^4 =0.
	\end{equation*} 
\end{rk}
\begin{center}
	\begin{figure}[h]
		\centering
		\includegraphics[width=10cm]{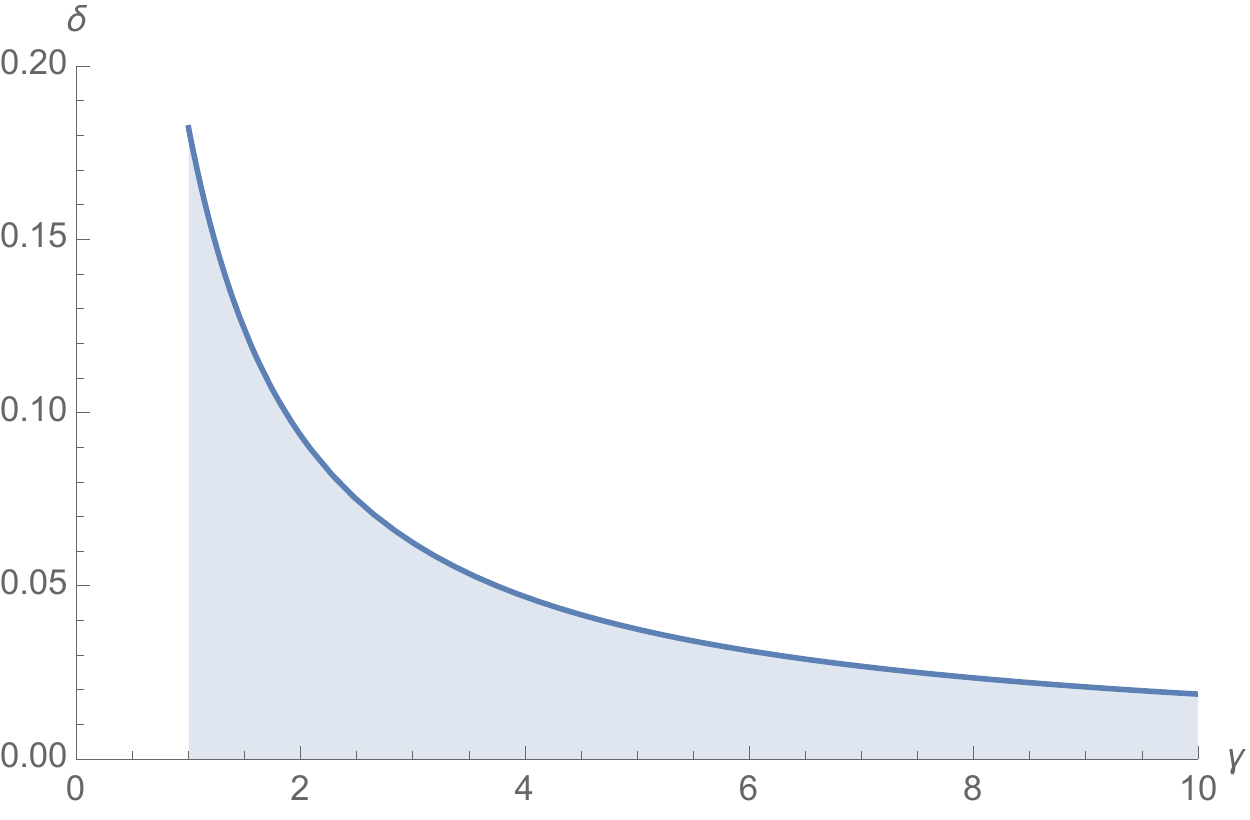}
		\caption{The good set $G_2$ embedded in the upper right quadrant of $\mathbb{R}^2$. }
		\label{fig: GoodSet}
	\end{figure}
\end{center}
\newpage 
\subsection{Large degree asymptotics}
We have the following model-independent result for large degrees $d$.
\begin{thm} \label{thm: LargeDegree}
	Let $U$ be some symmetric gradient interaction potential for a $\Z^k$-valued random field on the $d§$-regular tree with $U(0)=0$ and	
	$v:=\inf_{j \in \Z^k \setminus \{0\}}U(j)>0$. 
	Let $A>\frac{1}{v}$ be any fixed number and set
	\begin{equation*}
	\beta_{A,d}:=A \log{d}/(d+1).
	\end{equation*}
	Assume that  for the associated transfer operator $Q_\beta:=\exp(-\beta U)$ we have \\$\Vert Q_{\beta_{A,d}} \Vert_{\frac{d+1}{2},\Z^k}<\infty$ for some $d \geq 2$.
	Then the following holds true.
	\begin{enumerate}
		\item  There is a minimal degree $d_0 \geq 2$ such that for all $d \geq d_0$ there is a family of distinct tree-automorphism invariant Gibbs measures $(\mu_i)_{i \in \Z^k}$ for the transfer operator $Q_\beta$ at any $\beta \geq \beta_{A,d}$. 
		\item In this range the measures $\mu_i$ satisfy the following concentration bounds.
		\[\frac{\mu_i(\sigma_0 \neq i)}{\mu_i(\sigma_0=i)}\leq C \frac{1}{d} \stackrel{d \rightarrow \infty}{\rightarrow}0,\]
		where $C>0$ is some constant. 
	\end{enumerate}
\end{thm}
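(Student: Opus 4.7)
The plan is to deduce the theorem from Theorem \ref{Coexthm: main} by verifying, for every $\beta\ge\beta_{A,d}$ and all sufficiently large $d$, that the pair $(\gamma,\delta):=(\Vert Q_\beta\Vert_{(d+1)/2,\Z^k},\Vert Q_\beta\Vert_{d+1,\Z^k\setminus\{0\}})$ lies in the good set $G_d$. Both norms are non-increasing in $\beta$, and $G_d$ is downward-closed in each coordinate (the same witness $\varepsilon$ in \eqref{eq: good set} works for every smaller pair, as long as $\gamma>1$), so it suffices to treat the borderline $\beta=\beta_{A,d}$, at which $Q_\beta(j)^p=d^{-ApU(j)/(d+1)}$ for $p\in\{(d+1)/2,\,d+1\}$.

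To understand $\gamma$ and $\delta$ at $\beta=\beta_{A,d}$, introduce $S_d:=\sum_{j\ne 0}d^{-AU(j)/2}$ and $\Sigma_d:=\sum_{j\ne 0}d^{-AU(j)}$, so that $\gamma^{(d+1)/2}=1+S_d$ and $\delta^{d+1}=\Sigma_d$. By hypothesis $S_{d^\ast}<\infty$ for some $d^\ast$; since $d\mapsto d^{-AU(j)/2}$ is non-increasing and tends to $0$, monotone convergence forces $S_d\downarrow 0$, hence $\gamma\to 1$. Factoring $d^{-Av}$ out of $\Sigma_d$ and applying dominated convergence yields $\Sigma_d\sim N(v)\,d^{-Av}$, where $N(v):=\#\{j\colon U(j)=v\}$ is finite (otherwise already $\Sigma_{d^\ast}$ would diverge). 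Since $\delta\to 1$, this propagates to $\delta^{d-1}\sim\delta^d\sim\delta^{d+1}\sim N(v)\,d^{-Av}$, and the assumption $Av>1$ will be used precisely to ensure that these rates beat the polynomial prefactor $d$ appearing in the defining inequalities of $G_d$.

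To produce a witness for $(\gamma,\delta)\in G_d$, I would take $\varepsilon:=(1+K/d)\delta$ with a fixed $K>0$, so that $(1+K/d)^d\to e^K$ stays bounded. The first inequality $\delta+\gamma\varepsilon^d\le\varepsilon$ rearranges to $\gamma(1+K/d)^d\delta^{d-1}\le K/d$, whose left-hand side is $O(d^{-Av})$ and thus eventually $\le K/d$ by $Av>1$. The second inequality's left-hand side is bounded by $2d(\gamma\varepsilon^{d-1}+\delta\varepsilon^d)\le 2d(\gamma+\delta)\varepsilon^{d-1}\le C\,d\,\delta^{d-1}=O(d^{1-Av})\to 0$. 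Both hold for every $d\ge d_0$ with a suitable $d_0$, and Theorem \ref{Coexthm: main} then delivers the family $(\mu_i)_{i\in\Z^k}$.

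For part (ii), the smallest positive root of $\varepsilon=\gamma\varepsilon^d+\delta$ satisfies $\varepsilon=\delta/(1-\gamma\varepsilon^{d-1})=\delta\bigl(1+O(d^{-Av})\bigr)$, so both $\delta\varepsilon^d$ and $\gamma\varepsilon^{d-1}$ are $O(d^{-Av})$ and the correction factor $\bigl((1+\delta\varepsilon^d)/(1-\gamma\varepsilon^{d-1})\bigr)^{d+1}=\exp\bigl(O(d^{1-Av})\bigr)\to 1$. Therefore the upper bound in Theorem \ref{Coexthm: main} reduces to $\delta^{d+1}(1+o(1))\sim N(v)\,d^{-Av}\le C/d$ for $d$ large, as required. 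The main obstacle — and the place where the borderline hypothesis $A>1/v$ is genuinely needed — is the tight competition between the prefactor $d$ multiplying $\varepsilon^{d-1}$ in the second defining inequality of $G_d$ and the quantity $\delta^{d-1}$: one must shrink $\varepsilon$ to within an $O(1/d)$-fraction of $\delta$, since any constant-ratio expansion $(1+c)\delta$ would make $(1+c)^d$ blow up, and it is exactly the condition $Av>1$ that makes $d\cdot d^{-Av}\to 0$ and closes both inequalities.
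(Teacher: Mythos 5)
Your proposal is correct and follows essentially the same strategy as the paper: reduce by monotonicity to $\beta=\beta_{A,d}$, compute the asymptotics $\gamma\to 1$, $\delta^{d+1}=O(d^{-Av})$, exhibit an explicit $\varepsilon$ slightly above $\delta$, verify both defining inequalities of $G_d$ using $Av>1$, and then feed the smallest root into the localization bound of Theorem \ref{Coexthm: main} to get the $C/d$ estimate. The only (immaterial) difference is the choice of witness — you take the multiplicative perturbation $\varepsilon=(1+K/d)\delta$, while the paper uses $\bar{\varepsilon}=1-A_3v\tfrac{\log d}{d+1}$ with an auxiliary hierarchy $\tfrac{1}{v}<A_3<A_2<A_1<A$ — and a slightly different bookkeeping of the correction factor in part (ii).
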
	
The analogous large-degree existence results for the gradient Gibbs measures with local state space $\Z$ can be derived under summability of $Q$ and condition \ref{cond: DoubleSum}.
\subsection{Examples: SOS model and log-potential} \label{sec: Examples}
We illustrate the theory developed above by two concrete examples 
with local state space $\Z$ for a range of finite degrees. In both cases, the transfer operator (the potential respectively) is parametrized by the inverse temperature $\beta>0$. Hence the respective parameters $\gamma$ and $\delta$ in Theorem \ref{Coexthm: main} are both functions of $\beta>0$ whose values are obtained by carrying out the corresponding series.
\begin{figure}[h]
	\centering
	\begin{tabular}{l||l|l|l}
		Model & $Q(i)$& $\gamma_d(\beta)$ & $\delta_d(\beta)$ \\
		\hline 
		SOS &$\exp(-\beta \vert i \vert)$&$\tanh(\frac{d+1}{4}\beta)^{-\frac{2}{d+1}}$&$(\frac{2}{\exp((d+1)\beta)-1})^{\frac{1}{d+1}}$  \\
		Log-potential &$\frac{1}{(1+\vert i \vert)^\beta}$&$\big(2\zeta(\frac{d+1}{2}\beta)-1 \big)^\frac{2}{d+1}$&$\big(2(\zeta( \, (d+1)\beta \, )-1) \big)^\frac{1}{d+1}$	
	\end{tabular}
	\caption{The two models and their respective parameters. Here $\zeta(s)=\sum_{i=1}^{\infty}(\frac{1}{i})^s$ denotes the Riemann zeta-function.}
\end{figure}

Inserting the functions $\gamma_d$ and $\delta_d$ into \eqref{eq: Ball} and \eqref{eq: L-const}, i.e., calculating both the size of the minimal invariant ball and the value of the respective Lipschitz-constant as a function of $\beta$, numerical calculation with MATHEMATICA gives the following Figure \ref{fig: ConcreteExamples} showing infima of inverse temperatures on which our method ensures the existence of an invariant ball with Lipschitz constant smaller than one. 
\begin{figure}[h]
	\centering
	\begin{tabular}{l||l|l}
		$d$ & $\beta_{d,\text{SOS}}$  & $\beta_{d,\text{Log}}$ \\
		\hline
		$2$&1.997 &2.908 \\
		$3$&1.321 &1.930\\%& & 7.027 \\
		$6$&0.7240&1.057\\
		$7$& 0.6198&0.9297 \\
		%$10$&0.475& &0.692& \\
		$100$&0.06946&0.1005 \\
		$1000$&$9.238*10^{-3}$ & 0.01334  \\
		%$5000$& & &0.00313 & 1.838 \\
		%$10000$&$0.1154*10^{-2}$ & &0.00167 & 1.813\\
		%$50000$& & &0.0003795 & 1.754 \\
		%$500000$&$0.3090*10^{-4}$ & &0.0000446 & 1.69939\\
		%$10^6$&$0.1615*10^{-4}$ &1.1689 &0.0000233 & 1.68651\\
		%$5*10^6$& & &0.000005124 & 1.66095 \\
		%$10^7$& & &0.000002662 & 1.65156 \\
		%$10^8$& & &0.0000002994 & 1.62535 \\
		$10^{10}$&$2.536*10^{-9}$&$3.658*10^{-9}$
	\end{tabular}
	\caption{Infima of inverse temperatures for which the pair of parameters \\$\gamma_d(\beta)=\Vert Q \Vert_{\frac{d+1}{2}, \Z}$ and $\delta_d(\beta)=\Vert Q \Vert_{d+1,\Z \setminus \{0\}}$ lies in the good set $G_d$. In this case, Theorem \ref{Coexthm: main} guarantees the existence of a family of tree-automorphism invariant Gibbs measures. In view of the second statement of Theorem \ref{thm: main2} and Remark \ref{rk: ConcSum}, the following holds true for delocalized gradient Gibbs measures. For the SOS model above these thresholds also a countable family of delocalized gradient Gibbs measures exist. For the logarithmic potential this is true if and only if $d \leq 6$, as delocalized gradient Gibbs measures can not exist at inverse temperatures below $1$. All numbers are given with four-digit precision.}
	\label{fig: ConcreteExamples}
\end{figure}
%\newpage

Let $d\geq 2$ and $q \geq 2$. Theorem \ref{thm: main2} on the existence of (delocalized) gradient Gibbs measures and Theorem \ref{Coexthm: main} on the existence of localized Gibbs measures were both formulated in terms of the same good set $G_d \subset (1,\infty) \times (0,\infty)$ but for different norms. More precisely, the proof of Theorem \ref{thm: main2} is based on showing that on condition of either the vector of the $1$-norms $(\Vert Q \Vert_{1, \Z}, \Vert Q \Vert_{1, \Z \setminus \{0\}})$ or the pair $(\gamma,\delta)=(\Vert Q\Vert_{\frac{d+1}{2}, \Z}, \Vert Q \Vert_{d+1, \Z \setminus \{0\}})$ considered in Theorem \ref{Coexthm: main} lying in the good set, the vector \[(\gamma_q,\delta_q)=(\Vert Q_q\Vert_{\frac{d+1}{2}, \Z_q}, \Vert Q_q \Vert_{d+1, \Z_q \setminus \{\bar{0}\}})\] of respective norms of the \textit{fuzzy} transfer operator lies in the good set $G_d$ either for all $q \geq 2$, or at least for all $q$ sufficiently large, respectively. This then ensures existence of $q$-periodic delocalized GGMs for the respective values of $q$. 

Let us illustrate these different norms in the concrete examples of the SOS model and the log-potential. In Figure \ref{Fig: GoodSetCurves} below we give two numerically computed pictures of the good set for the Cayley tree of order $d=3$ 
supplemented 
with a few of such curves parametrized by $\beta$.
The fact that
\[(\gamma_q,\delta_q) \stackrel{q \rightarrow \infty}{\rightarrow} (\gamma,\delta)\]
motivates our notation  $q=\infty$ for the vector $(\gamma,\delta)$ in Figure \ref{Fig: GoodSetCurves}.
For the explicit computation of the finite-$q$ norms in the more complicated case of the log-potential we used that the fuzzy transfer operators $Q_q$ can be expressed in terms of the \textit{Hurwitz zeta function}
$\zeta(s,a)=\sum_{n=0}^{\infty}\frac{1}{(n+a)^s}$ which is defined for  $\mathcal{R}(s)>1$ and $ 0<\mathcal{R}(a)\leq 1.$ 
\begin{center}
	\begin{figure}[]
		\centering
		\subfloat[SOS model with $\beta \in \lbrack 1.2,3 \rbrack$ ]
		{\includegraphics[width=10cm]{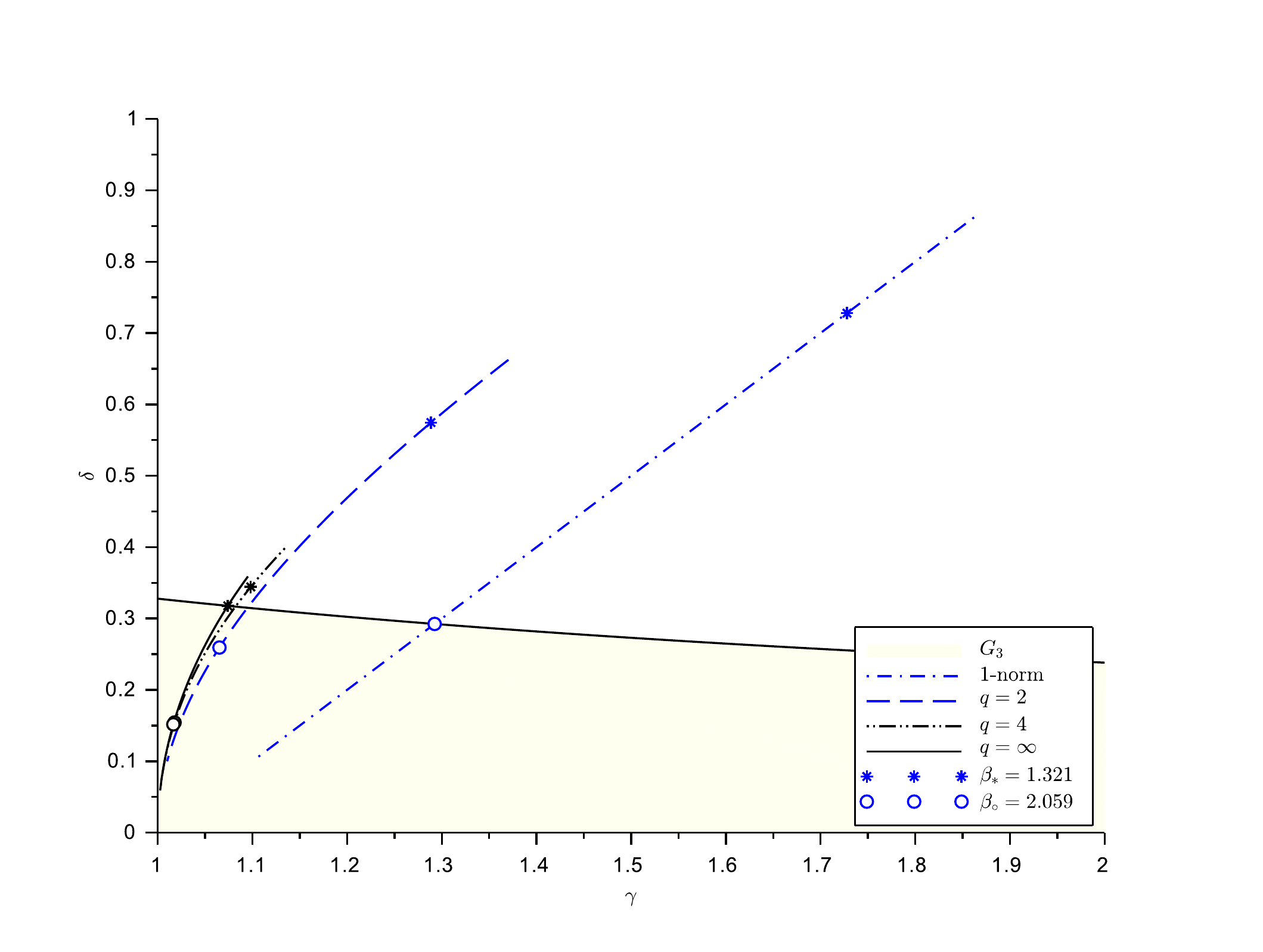}}
		\quad
		\subfloat[Model with log-potential and $\beta \in \lbrack 1.9,5 \rbrack$ ]{\includegraphics[width=10cm]{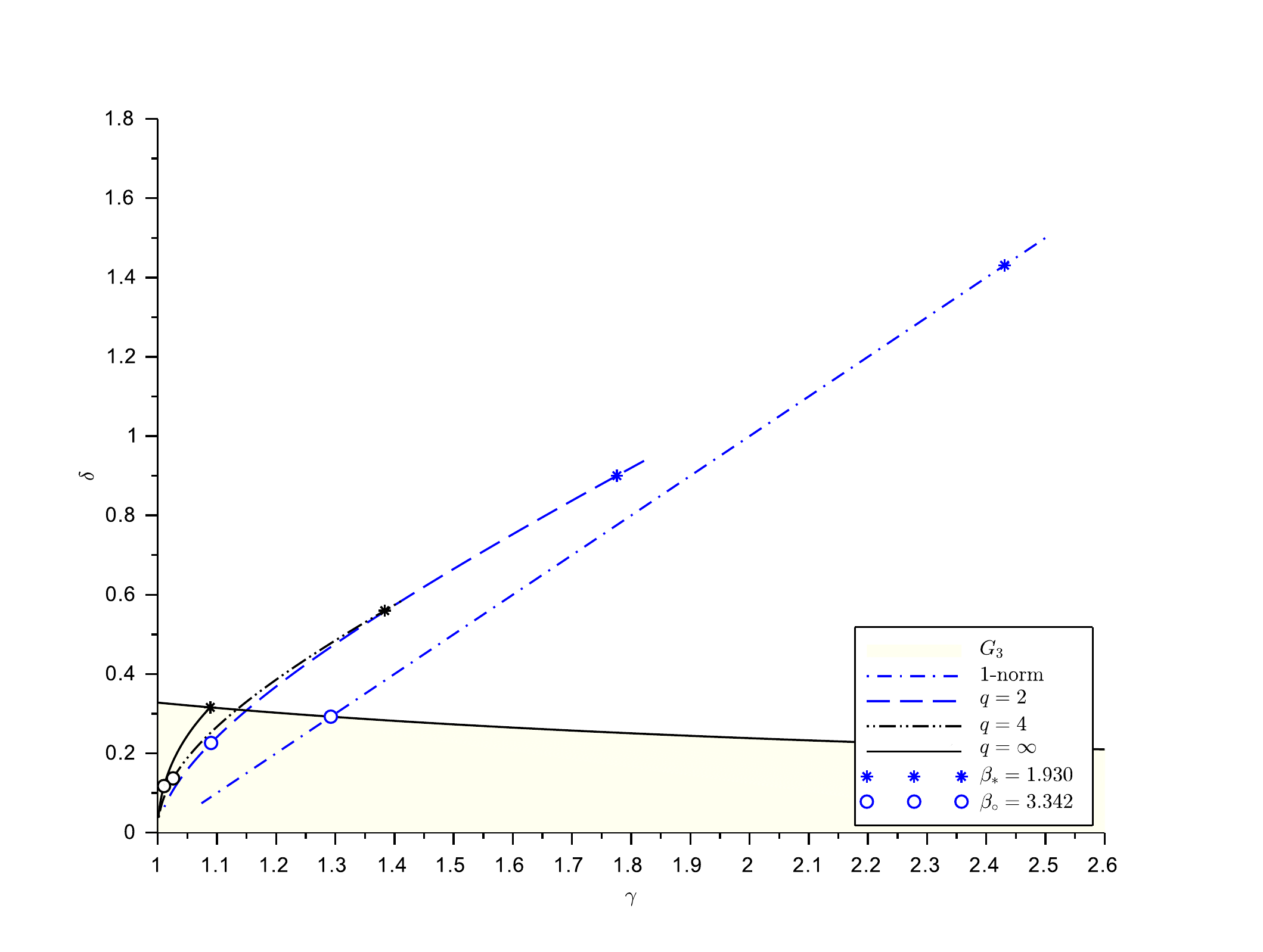}}
		\caption{  The good set $G_3$ for the Cayley tree of order $d=3$ bounded from above by the numerically computed solid line. It is supplemented with the curves $(\gamma_q,\delta_q)$ parametrized by $\beta$ within the given ranges and the curve $(\Vert Q \Vert_{1, \Z}, \Vert Q \Vert_{1, \Z \setminus \{0\}})$ of the $1$-norms. 
			%These norms are the relevant objects for the existence of $q$-height periodic GGMs. 
			The curves start from top right and enter the good set upon increase of $\beta$. 		
			The asterisks mark the values at $\beta_*$ on the different curves for which the $(\gamma,\delta)$-curve enters the good set. The circles mark the situation at $\beta_\circ$ where the curve of the $1$-norm enters the good set. The numbers are given with four-digit precision.
		}
		\label{Fig: GoodSetCurves}
	\end{figure}
\end{center}
\newpage

\section{Proofs} \label{Coexsec: Proofs}
The proof of Lemma \ref{lem: existenceModel} will be postponed to the end of this section.
\begin{proof}[Proof of Lemma \ref{lem: ineq}]
	Noticing that the denominator of $T(x)$ is bounded from below we have
	\begin{equation*}
	\begin{split}
	\Vert T(x) \Vert_{d+1, \mathbb{Z}^k \setminus \{0\}} & \leq \Vert Q(\cdot)+\sum_{j \in \mathbb{Z}^k \setminus \{0\}} Q(\cdot-j) \vert x(j) \vert^d \Vert_{d+1, \mathbb{Z}^k \setminus \{0\}}  \cr& \leq \Vert Q \Vert_{d+1, \mathbb{Z}^k \setminus \{0\}}+\Vert \sum_{j \in \mathbb{Z}^k \setminus \{0\}} Q( \cdot-j)\vert x(j)\vert^d \Vert_{d+1, \mathbb{Z}^k \setminus \{0\}}.     \cr
	%	& \leq \Vert Q \Vert_{d+1, \mathbb{Z}^k \setminus \{0\}}+\Vert Q*\vert \tilde{x}(j) \vert^d \Vert_{d+1, \mathbb{Z}^k}. 
	\end{split}
	\end{equation*}
	Let $\tilde{x}(i):=\begin{cases}
	x(i), \quad &\text{if } i \neq 0 \\
	0, \quad &\text{if } i =0
	\end{cases}$ denote the extension of $x$ to $\Z^k$ by $0$. 
	Then the second term can be estimated from above by a convolution in $l_{d+1}(\mathbb{Z}^k)$
	\begin{equation} \label{eq: canonicalEmbedding} 
	\begin{split}
	\Vert \sum_{j \in \mathbb{Z}^k \setminus \{0\}} Q( \cdot-j)\vert x(j)\vert^d \Vert_{d+1, \mathbb{Z}^k \setminus \{0\}}& \leq	\Vert \sum_{j \in \mathbb{Z}^k} Q( \cdot-j)\vert \tilde{x}(j)\vert^d \Vert_{d+1, \mathbb{Z}^k}       \cr
	& =: \Vert Q*\vert \tilde{x}(j) \vert^d \Vert_{d+1, \mathbb{Z}^k}. 
	\end{split}
	\end{equation} 
	Now we want to apply Young's inequality for convolutions of Borel-measurable functions on unimodal locally compact groups with respect to the Haar-measure (cf. Theorem 20.18 in \cite{He79}) to $l_p(\mathbb{Z}^k)$ \begin{equation} \label{Young}\Vert u * v \Vert_{r, \mathbb{Z}^k} \leq \Vert u \Vert_{p, \mathbb{Z}^k} \, \Vert v \Vert_{q, \mathbb{Z}^k}, \, 1+\frac{1}{r}=\frac{1}{p}+\frac{1}{q} \end{equation} where $u(i):= Q(i)$, $v(j)
	:= \vert \tilde{x}(j) \vert^d$ and $q=\frac{d+1}{d}$, $r= d+1$, $p=\frac{d+1}{2}$.
	
	This gives
	\begin{equation*}
	\begin{split}
	\Vert T(x) \Vert_{d+1, \mathbb{Z}^k \setminus \{0\}}& \leq \Vert Q \Vert_{d+1, \mathbb{Z}^k \setminus \{0\}}+ \Vert Q \Vert_{\frac{d+1}{2}, \mathbb{Z}^k} \, \Vert \tilde{x} \Vert^d_{\frac{d+1}{d},\mathbb{Z}^k} \cr
	&=\Vert Q \Vert_{d+1, \mathbb{Z}^k \setminus \{0\}}+\Vert Q \Vert_{\frac{d+1}{2}, \mathbb{Z}^k} \, \Vert x \Vert_{d+1,\mathbb{Z}^k \setminus \{0\}}^d. 
	%&\leq  \Vert Q \Vert_{1, \mathbb{Z} \setminus \{0\}}+\Vert Q \Vert_{1, \mathbb{Z}} \, \Vert x \Vert_{d+1,\mathbb{Z} \setminus \{0\}}^{d}, 
	\end{split}
	\end{equation*}
	%where in the last inequality we used that $p \leq q$ implies $\Vert u \Vert_{p, \mathbb{Z} \setminus \{0\}} \geq \Vert u \Vert_{q, \mathbb{Z} \setminus \{0\}}$ which follows from the similar result for the $l^p$-spaces. 
	By our assumptions the r.h.s. now is a finite number, concluding the proof of the Lemma.
\end{proof}
The proof of Proposition \ref{thm: cond. Banach} is based on the following Lemma:
\begin{lemma} \label{lem: Lipschitz}
	For all $x,y \in D$ and any $i \in \mathbb{Z}^k \setminus \{0\}$ the following holds.
	\begin{equation*}
	\begin{split}
	&\vert T(x)(i)-T(y)(i) \vert \cr 
	&\leq \sum_{j \in \mathbb{Z}^k \setminus \{0\}} Q(i-j) \vert x(j)^d-y(j)^d \vert \cr & \quad + \left(\sum_{k \in \mathbb{Z}^k \setminus \{0\}}Q(k) \vert x(k)^d-y(k)^d \vert \right) \left(Q(i)+ \sum_{j \in \mathbb{Z}^k \setminus \{0\}}Q(i-j)  y(j)^d  \right)
	\end{split}
	\end{equation*}
\end{lemma}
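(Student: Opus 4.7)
The plan is the standard trick for bounding a difference of quotients, followed by exploiting the fact that both denominators in the definition of $T$ are bounded below by $1$. Introduce the shorthands
\begin{equation*}
N(x):=Q(i)+\sum_{j \in \mathbb{Z}^k \setminus \{0\}}Q(i-j)\,x(j)^d, \qquad Z(x):=1+\sum_{j \in \mathbb{Z}^k \setminus \{0\}}Q(j)\,x(j)^d,
\end{equation*}
so that $T(x)(i)=N(x)/Z(x)$, and similarly for $y$. First I would split the difference as
\begin{equation*}
T(x)(i)-T(y)(i)=\frac{N(x)Z(y)-N(y)Z(x)}{Z(x)Z(y)}=\frac{N(x)-N(y)}{Z(x)}+\frac{N(y)\bigl(Z(y)-Z(x)\bigr)}{Z(x)Z(y)},
\end{equation*}
which is obtained by adding and subtracting $N(y)Z(y)$ in the numerator.

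Next I would exploit positivity: since $x,y \in D$ (hence nonnegative) and $Q>0$, both $Z(x)$ and $Z(y)$ are at least $1$. Taking absolute values and using the triangle inequality on the above identity therefore gives
\begin{equation*}
|T(x)(i)-T(y)(i)|\leq |N(x)-N(y)|+N(y)\,|Z(x)-Z(y)|.
\end{equation*}
Here I used $Z(x)\geq 1$ in the first summand and $Z(x)Z(y)\geq 1$ in the second, while $N(y)\geq 0$ keeps the factor intact.

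Finally I would apply the triangle inequality term-by-term inside each of $|N(x)-N(y)|$ and $|Z(x)-Z(y)|$, yielding
\begin{equation*}
|N(x)-N(y)|\leq \sum_{j \in \mathbb{Z}^k \setminus \{0\}}Q(i-j)\,|x(j)^d-y(j)^d|
\end{equation*}
and the analogous bound for $|Z(x)-Z(y)|$ with kernel $Q(k)$. Substituting these two estimates into the displayed inequality above yields exactly the claimed bound, since the factor $N(y)$ is precisely $Q(i)+\sum_{j\neq 0} Q(i-j)y(j)^d$. I do not foresee any real obstacle: the only subtle point is choosing the right additive splitting in the first step so that the factor multiplying $Z(y)-Z(x)$ ends up as $N(y)$ rather than $N(x)$, which matches the asymmetric statement of the lemma and will be what is needed downstream in Proposition \ref{thm: cond. Banach}, where $y$ is taken in the invariant ball.
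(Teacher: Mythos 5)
Your proposal is correct and follows essentially the same route as the paper: the identical splitting of the difference of quotients (numerator difference over one denominator plus the other numerator times the denominator difference), then bounding both denominators below by $1$, then termwise triangle inequality. The only difference is notational (the paper swaps the letters $Z$ and $N$ for numerator and denominator).
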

\begin{proof}[Proof of Lemma \ref{lem: Lipschitz}]
	Write $T(x)(i)=: \frac{Z(x)}{N(x)}$, i.e., $Z(x)$ and $N(x)$ are the nominator (denominator resp.) of $T(x)$ as defined in \eqref{Def: Operator}. Then we have:
	\begin{equation*}
	\begin{split}
	\vert T(x)(i)-T(y)(i) \vert 
	& = \vert \frac{Z(x)}{N(x)}-\frac{Z(y)}{N(y)} \vert \leq \frac{\vert Z(x)-Z(y)\vert }{N(x)}+Z(y)\frac{\vert N(y)-N(x) \vert}{N(x)N(y)} \cr
	& \leq \vert Z(x)-Z(y) \vert +Z(y) \vert N(y)-N(x) \vert, 
	\end{split}
	\end{equation*} 
	where the second inequality follows from the fact that $N(x)$ and $N(y)$ are bounded from below by $1$. This completes the proof of the Lemma.
\end{proof}
\begin{proof}[Proof of Proposition \ref{thm: cond. Banach}] 
	\mbox{} \\
	\begin{enumerate}
		\item The first statement of the Proposition follows immediately from Lemma \ref{lem: ineq}.
		\item  To prove the second statement of the Proposition, i.e., Lipschitz-continuity, consider any $x,y \in B_\varepsilon(0) \cap D$. By Lemma \ref{lem: Lipschitz} and the triangle inequality we have
		\begin{equation} \label{eq: Lipschitz}
		\begin{split}
		&\Vert T(x)-T(y) \Vert_{d+1, \mathbb{Z}^k \setminus \{0\}} \cr 
		&\leq \Vert \sum_{j \in \mathbb{Z}^k \setminus \{0\}} Q( \cdot-j) \vert x(j)^d-y(j)^d \vert \,  \Vert_{d+1, \mathbb{Z}^k \setminus \{0\}} \cr & \quad +  \left(\sum_{k \in \mathbb{Z}^k \setminus \{0\}}Q(k) \vert x(k)^d-y(k)^d \vert \right) \cr 
		& \qquad \, \times \Vert Q(\cdot)+\sum_{j \in \mathbb{Z}^k \setminus \{0\}}Q( \cdot -j)  y(j)^d \Vert_{d+1, \mathbb{Z}^k \setminus \{0\}}.
		\end{split}
		\end{equation} 
		We start with estimating the second term. First note that $y \in B_\varepsilon(0) \cap D$ implies
		\begin{equation}
		\Vert Q(\cdot) +\sum_{j \in \mathbb{Z}^k \setminus \{0\}}Q( \cdot -j)  y(j)^d \Vert_{d+1, \mathbb{Z}^k \setminus \{0\}}\leq \varepsilon. 
		\end{equation}
		In what follows, we employ the fact that for any real numbers $a,b$ we have
		\begin{equation*}
		a^d- b^d=(a-b)(a^{d-1}+ba^{d-2}+ \ldots +b^{d-2}a+b^{d-1}),
		\end{equation*}
		so for any $j \in \mathbb{Z}^k \setminus \{0\}$, 
		\begin{equation} \label{KeyEst}
		\begin{split}
		\vert x(j)^d-y(j)^d \vert &\leq \vert x(j)-y(j) \vert d \max(\vert x(j) \vert, \vert y(j) \vert)^{d-1} \cr 
		& \leq d \,  \vert x(j)-y(j) \vert \, (\vert x(j) \vert^{d-1}+\vert y(j) \vert^{d-1}).
		\end{split}
		\end{equation}
		For the prefactor of the second term on the r.h.s. of \eqref{eq: Lipschitz} we thus obtain 
		\begin{equation*}
		\begin{split}
		&\sum_{k \in \mathbb{Z}^k \setminus \{0\}}Q(k) \vert x(k)^d-y(k)^d \vert \cr 
		& \leq d\sum_{k \in \mathbb{Z}^k \setminus \{0\}}Q(k) \vert x(k)-y(k) \vert x(k)^{d-1} +d\sum_{k \in \mathbb{Z}^k \setminus \{0\}}Q(k) \vert x(k)-y(k) \vert y(k)^{d-1}  \cr 
		&=d\Vert Q \, \vert x-y \vert \, x^{d-1}\Vert_{1, \mathbb{Z}^k \setminus \{0\}}+d\Vert Q \, \vert x-y \vert \,  y^{d-1}\Vert_{1, \mathbb{Z}^k \setminus \{0\}} \cr 
		& \leq d\Vert Q \Vert_{d+1, \mathbb{Z}^k \setminus \{0\}} \ \Vert x-y \Vert_{d+1, \mathbb{Z}^k \setminus \{0\}} \, (\Vert x\Vert_{d+1, \mathbb{Z}^k \setminus \{0\}}^{d-1}+\Vert y\Vert_{d+1, \mathbb{Z}^k \setminus \{0\}}^{d-1}) \cr 
		& \leq 2d \delta \varepsilon^{d-1} \Vert x-y\Vert_{d+1, \mathbb{Z}^k \setminus \{0\}},  
		\end{split}
		\end{equation*} 
		where the second inequality follows from applying a generalized version of H\"older's inequality 
		\begin{equation*}
		\Vert u v w\Vert_{1, \mathbb{Z}^k \setminus \{0\}} \leq \Vert u\Vert_{p, \mathbb{Z}^k \setminus \{0\}}^\frac{1}{p} \, \Vert v\Vert_{q, \mathbb{Z}^k \setminus \{0\}}^\frac{1}{q} \, \Vert w\Vert_{r, \mathbb{Z}^k \setminus \{0\}}^\frac{1}{r}, \quad 1=\frac{1}{p}+\frac{1}{q}+\frac{1}{r} 
		\end{equation*} 
		to $u=Q$, $v= \vert x-y \vert$ and $w=x^{d-1}$ ($w=y^{d-1}$, respectively) with $p=q=d+1$ and $r=\frac{d+1}{d-1}$.
		It remains to estimate the first term on the r.h.s. of \eqref{eq: Lipschitz}.
		Similar to \eqref{eq: canonicalEmbedding}, 
		Young's inequality with $r=d+1$, $p=\frac{d+1}{2}$ and $q=\frac{d+1}{d}$ first gives
		\begin{equation*}
		\begin{split}
		\Vert \sum_{j \in \mathbb{Z}^k \setminus \{0\}} Q( \cdot-j) \vert x(j)^d-y(j)^d \Vert_{d+1, \mathbb{Z}^k \setminus \{0\}} 
		&\leq \Vert \sum_{j \in \mathbb{Z}^k} Q( \cdot-j) \vert \tilde{x}(j)^d-\tilde{y}(j)^d \vert \, \Vert_{d+1, \mathbb{Z}^k} \cr
		&\leq \Vert Q \Vert_{\frac{d+1}{2}, \mathbb{Z}^k} \Vert \tilde{x}^d-\tilde{y}^d \Vert_{\frac{d+1}{d}, \mathbb{Z}^k} \cr 
		&=\gamma \Vert x^d-y^d \Vert_{\frac{d+1}{d}, \mathbb{Z}^k \setminus \{0\}}.
		\end{split}
		\end{equation*}
		At this point we want to apply \eqref{KeyEst} in combination with H\"older's inequality in the form
		\begin{equation}
		\Vert uv \Vert_q \leq \Vert u \Vert_{q_1} \, \Vert v \Vert_{q_2} \text{where } 0<q,q_1,q_2< \infty \text{ and } \frac{1}{q}=\frac{1}{q_1}+\frac{1}{q_2} 
		\end{equation}  
		with $q=\frac{d+1}{d}$, $q_1=d+1$ and $q_2=\frac{d+1}{d-1}$ to obtain
		\begin{equation*}
		\begin{split}
		&\Vert x^d-y^d \Vert_{\frac{d+1}{d}, \mathbb{Z}^k \setminus \{0\}} \cr 
		& \leq d \Vert \, \vert x-y \vert \, (x^{d-1}+y^{d-1}) \Vert_{\frac{d+1}{d}, \mathbb{Z}^k \setminus \{0\}} \cr  
		& \leq d \Vert x-y \Vert_{d+1, \mathbb{Z}^k \setminus \{0\}}(\Vert x^{d-1} \Vert_{\frac{d+1}{d-1}, \mathbb{Z}^k \setminus \{0\}} + \Vert y^{d-1} \Vert_{\frac{d+1}{d-1}, \mathbb{Z}^k \setminus \{0\}}) \cr 
		&=d \Vert x-y \Vert_{d+1, \mathbb{Z}^k \setminus \{0\}}(\Vert x \Vert_{d+1, \mathbb{Z}^k \setminus \{0\}}^{d-1} + \Vert y \Vert_{d+1, \mathbb{Z}^k \setminus \{0\}}^{d-1} ) \cr 
		& \leq 2d \varepsilon^{d-1} \Vert x-y \Vert_{d+1, \mathbb{Z}^k \setminus \{0\}}.
		\end{split}
		\end{equation*}
		Inserting these estimates into \eqref{eq: Lipschitz}, we arrive at
		\begin{equation}
		\begin{split}
		&\Vert T(x)-T(y) \Vert_{d+1, \mathbb{Z}^k \setminus \{0\}} \cr 
		& \leq 2d\gamma \varepsilon^{d-1}\Vert x-y \Vert_{d+1, \mathbb{Z}^k \setminus \{0\}}+\varepsilon (2d \delta \varepsilon^{d-1} \Vert x-y\Vert_{d+1, \mathbb{Z}^k \setminus \{0\}}) \cr 
		&= \Vert x-y \Vert_{d+1, \mathbb{Z}^k \setminus \{0\}}2d(\gamma\varepsilon^{d-1}+\varepsilon^{d}\delta).
		\end{split}
		\end{equation}
		Hence $T$ is Lipschitz-continuous on $B_{\varepsilon}(0) \cap D$ with constant
		\begin{equation*}
		\begin{split}
		L&=2d(\gamma\varepsilon^{d-1}+\delta\varepsilon^{d}),
		\end{split}
		\end{equation*}
		which proves the second statement of the theorem.
	\end{enumerate}    
\end{proof}
\begin{proof}[Proof of Theorem \ref{Coexthm: main} ]
	Assume that $d \geq 2$ and $(\gamma, \delta) \in G_d$. Then, by definition of the set $G_d$, there is some $\varepsilon>0$ such that the equations \ref{eq: Ball} and \ref{eq: L-const} are satisfied.
	From Proposition \ref{thm: cond. Banach} it follows that $T$ leaves the the $\varepsilon$-ball $B_\varepsilon(0) \subset l_{d+1}(\mathbb{Z}^k \setminus \{0\})$ invariant and that $T$ restricted on $B_\varepsilon(0)$ is a contraction mapping, hence Banach's fixed point theorem guarantees the existence of a (unique) fixed point $x \in B_\varepsilon(0)$. Going over to $\bar{x} \in l_{d+1}(\mathbb{Z}^k)$ where $\bar{x}(i):=\begin{cases} 1, &\quad i=0, \\ x(i), & \quad \text{else} \end{cases}$ and taking the $d$th power we finally obtain a spatially homogeneous boundary law solution $\lambda\in l_{\frac{d+1}{d}}(\mathbb{Z}^k)$ with $\lambda(0)=1$. By Theorem \ref{Coex: BLMC} this normalizable boundary law solution corresponds to a unique tree-automorphism invariant Gibbs measure $\mu_0=\mu^\lambda$ for the Gibbsian specification \eqref{CoexDef: Gibbs specification}.
	As the transfer operator $Q$ is obtained from a gradient interaction potential, for any $i \in \Z^k$ the function $\lambda_i$ on $\Z^k$ given by $\lambda_i(j):=\lambda(j-i)$ will also satisfy the boundary law equation \eqref{eq: homBLeq}. Hence we obtain a whole family $(\lambda_i)_{i \in \Z^k}$ of boundary laws.
	To show that they are distinct, we note that each $\lambda_i$ is symmetric and, by construction, an element of the $\varepsilon$-ball in $l_{\frac{d+1}{d}}(\Z^k)$ centered arround the element that is one at site $i$ and zero elsewhere. Since $\varepsilon<1$ we conclude that the family $(\lambda_i)_{i \in \Z^k}$ is pairwise distinct. This implies that the measures $\mu_i$, $i \in \Z^k$, each associated to the respective $\lambda_i$ are also distinct which concludes the first part of the proof.

	In the next step we prove the localization bounds. First note that by construction of the Gibbs measure $\mu_\lambda$ we have the following single-site marginal:
	\begin{equation}
	\mu_0(\sigma_0=i)=\frac{\lambda(i)^\frac{d+1}{d}}{1+\sum_{j\in \mathbb{Z}^k\setminus \{0\}}\lambda(j)^\frac{d+1}{d}}
	=\frac{\bar{x}(i)^{d+1}}{1+\sum_{j\in \mathbb{Z}^k\setminus \{0\}}x(j)^{d+1}},
	\end{equation}
	hence
	\begin{equation}
	\frac{\mu_0(\sigma_0=i)}{\mu_0(\sigma_0=0)}=\bar{x}(i)^{d+1}.
	\end{equation}
	From this it follows 
	\begin{equation} \label{eq: auxLocalization}
	\frac{\mu_0(\sigma_0 \neq 0)}{\mu_{0}(\sigma_0=0)}=\Vert x \Vert_{d+1, \mathbb{Z}^k \setminus \{0\}}^{d+1}.
	\end{equation}
	
	Now we want to approximate $\Vert x \Vert_{d+1, \mathbb{Z}^k \setminus \{0\}}^{d+1}$ by $\Vert Q \Vert_{d+1, \mathbb{Z}^k \setminus \{0\}}^{d+1}$.
	First by the the fixed-point property 
	\begin{equation*}
	\begin{split}
	x(i)-Q(i)&=T(x)(i)-Q(i) \cr &=- Q(i)\frac{\sum_{j \in \mathbb{Z}^k \setminus \{0\}}Q(j)  \, \vert x(j) \vert^d }
	{1+\sum_{j \in \mathbb{Z}^k \setminus \{0\}}Q(j)  \, \vert x(j) \vert^d }
	+\frac{\sum_{j \in \mathbb{Z}^k \setminus \{0\}}Q(i-j)  \, \vert x(j) \vert^d }{1+\sum_{j \in \mathbb{Z}^k \setminus \{0\}}Q(j)  \, \vert x(j) \vert^d }.
	\end{split}
	\end{equation*}
	Bounding the denominators from below by $1$ then gives
	\begin{equation} \label{eq: xQ-Distance}
	\begin{split}
	\Vert x-Q\Vert_{d+1, \mathbb{Z}^k \setminus \{0\}}&\leq  \Vert Q\Vert_{d+1,\mathbb{Z}^k \setminus \{0\}}
	\sum_{j \in \mathbb{Z}^k \setminus \{0\}}Q(j)  \, \vert x(j) \vert^d \cr
	& \quad +\Vert
	\sum_{j \in \mathbb{Z}^k \setminus \{0\}}Q(\cdot -j)  \, \vert x(j) \vert^d \Vert_{d+1,\mathbb{Z}^k \setminus \{0\}}.
	\end{split}
	\end{equation}
	Now, application of H\"older's inequality with $1=\frac{1}{d+1}+\frac{d}{d+1}$ to the first term leads to
	\begin{equation*}
	\Vert Q\Vert_{d+1,\mathbb{Z}^k \setminus \{0\}}
	\sum_{j \in \mathbb{Z}^k \setminus \{0\}}Q(j)  \, \vert x(j) \vert^d \leq \Vert Q\Vert_{d+1,\mathbb{Z}^k \setminus \{0\}}^2  \Vert x \Vert_{d+1,\mathbb{Z}^k \setminus \{0\}}^d.
	\end{equation*} 
	Applying the same estimate as in \eqref{eq: canonicalEmbedding} to the second term, we arrive at
	\begin{equation*}
	\begin{split}
	&\vert \, \Vert x \Vert_{d+1, \mathbb{Z}^k \setminus \{0\}}-\Vert Q \Vert_{d+1, \mathbb{Z}^k \setminus \{0\}} \, \vert  \cr
	& \leq \Vert x-Q\Vert_{d+1, \mathbb{Z}^k \setminus \{0\}} \cr 
	& \leq \Vert Q\Vert_{d+1,\mathbb{Z}^k \setminus \{0\}}^2  \Vert x \Vert_{d+1,\mathbb{Z}^k \setminus \{0\}}^d+\Vert Q \Vert_{\frac{d+1}{2}, \mathbb{Z}^k}\Vert x\Vert_{d+1, \mathbb{Z}^k \setminus \{0\}}^d \cr 
	&=\Vert x \Vert_{d+1,\mathbb{Z}^k \setminus \{0\}}^d(\delta^2+\gamma).
	\end{split}
	\end{equation*} 
	Dividing both sides of the inequality by the positive number $\delta$ and writing
	
	$A:=\frac{1}{\delta}\Vert x \Vert_{d+1,\mathbb{Z}^k \setminus \{0\}}$ we obtain
	\begin{equation*}
	\vert A-1 \vert \leq  \delta \Vert x \Vert_{d+1,\mathbb{Z}^k \setminus \{0\}}^d+\gamma \Vert x\Vert_{d+1, \mathbb{Z}^k \setminus \{0\}}^{d-1}A, 
	\end{equation*}
	so 
	\begin{equation*}
	\frac{1-\delta\Vert x \Vert_{d+1,\mathbb{Z}^k \setminus \{0\}}^d}{1+\gamma\Vert x \Vert_{d+1,\mathbb{Z}^k \setminus \{0\}}^{d-1}} \leq A \leq \frac{1+\delta\Vert x \Vert_{d+1,\mathbb{Z}^k \setminus \{0\}}^d}{1-\gamma\Vert x \Vert_{d+1,\mathbb{Z}^k \setminus \{0\}}^{d-1}}.
	\end{equation*}
	Recalling the definition of $A$ and equation \eqref{eq: auxLocalization} and taking into account that \\$x \in B_\varepsilon(0)$ we arrive at the second statement of the theorem
	\begin{equation*}
	\left(\delta \frac{1-\delta\varepsilon^d}{1+\gamma \varepsilon^{d-1}} \right)^{d+1}\leq \frac{\mu_0(\sigma_0 \neq 0)}{\mu_0(\sigma_0=0)} \leq \left(\delta \frac{1+\delta\varepsilon^d}{1-\gamma \varepsilon^{d-1}} \right)^{d+1}.
	\end{equation*}
	Note that the proof and hence the theorem stay true if $\Z^k$ is replaced by the ring $\Z_q$ as all steps involving H\"older's and Young's inequalities are also valid.
\end{proof}

\begin{proof}[Proof of Theorem  \ref{thm: main2}]
	We look at the appropriate $q$-periodic boundary law solutions. 
	These can be rephrased in terms of length-$q$ boundary law solutions 
	for the $q$-spin model with $Q_q(\bar{i}):=\sum_{j\in \bar{i}}Q(j)=\sum_{j \in \Z}Q(i+qj)$. Thus if \begin{equation} \label{cond: goodSetgradient}
	(\Vert {Q_q} \Vert_{\frac{d+1}{2}, \Z_q \setminus \{0\}},\Vert Q_q \Vert_{d+1, \Z_q \setminus \{0\}}) \in G_d
	\end{equation} then existence of tree-automorphism invariant GGMs coming from $q$-periodic boundary law solutions follows from the version of Theorem \ref{Coexthm: main} for the local state space $\Z_q$. More precisely define $\bar{Q}_q(\bar{i}):=\frac{Q_q(\bar{i})}{Q_q(\bar{0})}$, $\bar{i} \in \Z_q$, and note that \ref{cond: goodSetgradient} implies \\$(\Vert \bar{Q}_q \Vert_{\frac{d+1}{2}, \Z_q \setminus \{0\}},\Vert \bar{Q}_q \Vert_{d+1, \Z_q \setminus \{0\}}) \in G_d$, hence the operator $T$ given by \ref{Def: Operator} for $\bar{Q}_q$ has a fixed point which provides the desired solution.

	Now, for any $1 \leq p<\infty$
	\begin{equation*}
	\begin{split}
	\Vert Q_q \Vert_{p, \Z_q}&=(\sum_{i \in \Z_q}Q_q(i)^p)^\frac{1}{p}=\big(\sum_{i=0}^{q-1}(\sum_{j \in \Z}Q(i+qj))^p \big)^\frac{1}{p} \cr 
	& \leq \sum_{i=0}^{q-1}\sum_{j \in \Z}Q(i+qj)=\Vert Q \Vert_{1, \Z}<\infty
	\end{split}
	\end{equation*} 
	and similarly $\Vert Q_q \Vert_{p, \Z_q \setminus \{0\}} \leq \Vert Q \Vert_{1, \Z \setminus \{0\}}$.
	This already proves the first statement of the theorem.
	
	To prove the second part, let $p \geq \frac{d+1}{2}$ and consider 
	\begin{equation*}
	f_q: \Z \rightarrow [0, \infty) \, ; \, f_q(i)=\begin{cases}
	(\sum_{j \in \Z}Q(i+qj))^p &\,  \text{if } i \in \{-\lfloor \frac{q}{2} \rfloor, \ldots, 0, \ldots, \lfloor \frac{q}{2} \rfloor\} \\
	0, &\, \text{else. }
	\end{cases}
	\end{equation*}
	We have $\Vert Q_q \Vert_p^p=\sum_{i \in \Z}f_q(i)-Q_{\lfloor \frac{q}{2} \rfloor}(i) \chi(q \text{ is even and } i=\frac{q}{2})$. Moreover, by the assumption $Q \in l_1(\Z)$ the family $(f_q)_{q \in \{2,3, \ldots\}}$ of non-negative functions is pointwisely converging to the function $f(\cdot)=Q(\cdot)^p$ on $\Z$. \\Similarly, $Q_{\lfloor \frac{q}{2} \rfloor}(i) \chi(q \text{ is even and } i=\frac{q}{2}) \stackrel{q \rightarrow \infty}{\rightarrow} 0$ for any fixed $i \in \Z$.
	
	In the following we will construct an integrable majorant for the family $(f_q)_{q \in \{k,k+1, \ldots\}}$ to be able to apply dominated convergence.
	First, going over to $\tilde{Q}$, we set 
	\begin{equation*}
	\tilde{f}_q: \Z \rightarrow [0, \infty) \, ; \, \tilde{f}_q(i)=\begin{cases}
	(\sum_{j \in \Z}\tilde{Q}(i+qj))^p &\quad \text{if } i \in \{-\lfloor \frac{q}{2} \rfloor, \ldots, 0, \ldots, \lfloor \frac{q}{2} \rfloor\} \\
	0, &\quad \text{else. }
	\end{cases}
	\end{equation*}
	Now for $q \in \{2,3, \ldots \}$ define the function
	\begin{equation*}
	g_q: \Z \rightarrow [0, \infty) \, ; \, g_q(i)= \begin{cases}
	\tilde{f}_q(i), & \quad \text{if } \vert i \vert \leq \lfloor \frac{q}{2} \rfloor \\
	\tilde{f}_{2i}(i), &\quad \text{if } \vert i \vert>  \lfloor \frac{q}{2} \rfloor,
	\end{cases} 
	\end{equation*}
	which is supported on the whole integers $\Z$.
	Clearly $g_q \geq \tilde{f}_q \geq f_q$. As $\tilde{Q}(i)$ is by construction monotonically decreasing in $\vert i \vert$, we have
	\begin{equation*}
	g_{q+1}(i)-g_q(i)= \begin{cases}
	\tilde{f}_{q+1}(i)-\tilde{f}_q(i)\leq 0 &\quad \text{if } \vert i \vert \leq  \lfloor \frac{q}{2} \rfloor\\
	\tilde{f}_{q+1}(i)-\tilde{f}_{2i}(i) \leq 0 &\quad \text{if }  \lfloor \frac{q}{2} \rfloor < \vert i \vert \leq  \lfloor \frac{q+1}{2} \rfloor \\
	\tilde{f}_{2i}(i)-\tilde{f}_{2i}(i)=0 &\quad \text{if } \vert i \vert >  \lfloor \frac{q+1}{2} \rfloor.
	\end{cases}	
	\end{equation*}
	Thus the family $(g_q)_{q \in \{2,3, \ldots\}}$ is decreasing. 
	Hence we have $f_q \leq \tilde{f}_q \leq g_2$ for all \\$q \in \{2,3 \dots\}.$
	
	%Moreover, $\vert f_q(i)-g_q(i) \vert \stackrel{q \rightarrow \infty}{\rightarrow} 0$ for any $i \in \Z$, so $g_q(i) \stackrel{q \rightarrow \infty}{\rightarrow} Q(i)^p$.
	%At last, we have 
	As $g_2(i)=(\sum_{j \in \Z}\tilde{Q}(i+2ij))^p$ for all $\vert i \vert \geq 1$, integrability of $g_2$ (more precisely of any element of the family $(g_q)_{q \in \{2,3, \ldots\}}$) is equivalent to finiteness of the 
	expression
	\begin{equation*}
	\sum_{i \in \Z \setminus \{0\}} (\sum_{j \in \Z} \tilde{Q}(i(1+2j)))^p =2^{1+p}\sum_{i=1}^{\infty}(\sum_{j=0}^{\infty}\tilde{Q}(i(1+2j)))^p.
	\end{equation*}
	From symmetry of $\tilde{Q}$ and monotonicity in $\vert i \vert$ it then follows that
	integrability of $g_2$ is also equivalent to
	\begin{equation*}
	\sum_{i=1}^{\infty}(\sum_{j=1}^{\infty}\tilde{Q}(ij))^p<\infty.
	\end{equation*}
	By assumption \ref{cond: DoubleSum} this holds true for $p=\frac{d+1}{2}$ and hence also for $p=d+1$.
	From dominated convergence it follows that
	\begin{equation*}
	\begin{split}
	\Vert Q_q \Vert_{p, \Z_q}^p&=\sum_{i \in \Z} f_q(i)-Q_{\lfloor \frac{q}{2} \rfloor}(i)\chi(q \text{ is even and } i =\frac{q}{2})  \stackrel{q \rightarrow \infty}{\rightarrow} \sum_{i \in \Z} Q(i)^p=\Vert Q \Vert_{p,\Z}^p.
	\end{split}
	\end{equation*} 
	Similarly we have $\Vert Q_q \Vert_{p,\Z_q \setminus \{0\}} \stackrel{q \rightarrow \infty}{\rightarrow} \Vert Q \Vert_{p, \Z \setminus \{0\}}$. The proof of the second statement is then concluded by the assumption $(\Vert Q \Vert_{\frac{d+1}{2}, \mathbb{Z}}, \Vert Q \Vert_{d+1, \mathbb{Z}\setminus \{0\}}) \in G_d^{o}$.
\end{proof}

\begin{proof}[Proof of Theorem \ref{thm: deloc}]
	\label{coexpr: deloc}
	Let $\nu$ denote the gradient Gibbs measure obtained by projecting any of the localized Gibbs measures given by Theorem \ref{Coexthm: main} to the space of gradient configurations. Further let $\nu^{\lambda_q}$ denote a fixed gradient Gibbs measure constructed from a $q$-periodic boundary law $\lambda_q$.
	We will show that the marginals on a fixed path $(b_1, b_2, \ldots, b_n)$ of length $n$ differ as $n$ becomes sufficiently large.  
	More precisely, let $\eta_b:=\sigma_x-\sigma_y$ denote the gradient spin variable along the edge $b=(x,y) \in \vec{L}$ and set $W_n:= \sum_{i=1}^n\eta_{b_i}.$ 
	Then we have
	\begin{equation*}
	\nu(W_n=k)=\sum_{i \in \Z}\alpha(i)P^n(i,i+k)
	\end{equation*}
	where $P$ is the transition operator for the irreducible aperiodic tree-indexed Markov chain (the localized Gibbs measure) associated to boundary law $\lambda$ given by \\$P(i,j)=\frac{Q( i-j ) \lambda(j)}{\sum_{l \in \Z}Q( i-l ) \lambda(l)}>0$ and $\alpha$ is its stationary distribution given by $\alpha(i)=\frac{\lambda(i)^\frac{d+1}{d}}{\Vert \lambda \Vert_{\frac{d+1}{d}}^{\frac{d+1}{d}}}$. Existence of the stationary distribution guarantees that the process is positive recurrent	
	(cp. Theorem 3.3.1 in \cite{Br99}) and hence ergodic. \\Thus $P^n(i,j) \stackrel{n \rightarrow \infty}{\rightarrow} \alpha(j)$ for any fixed $(i,j) \in \Z^2$ (see Theorem 4.2.1 in \cite{Br99}). Dominated convergence then gives
	\begin{equation} \label{eq: Localization}
	\nu(W_n=k)=\sum_{i \in \Z}\alpha(i)P^n(i,i+k) \stackrel{n \rightarrow \infty}{\rightarrow} \sum_{i \in \Z} \alpha(i)\alpha(i+k)>0 \text{ for any } k \in \Z.
	\end{equation}
	On the other hand we will show that $\nu^{\lambda_q}(W_n=k) \stackrel{n \rightarrow \infty}{\rightarrow} 0$ for any $k \in \Z$, i.e., \textit{delocalization}.
	Let $(b_1,b_2, \ldots, b_n)$ be again any fixed path of length $n$ and $\sigma'=(\sigma'_{x_i})_{i=1, \ldots, n+1}$ be the fuzzy chain on $\Z_q$ along this path with respect to $P'_q$. 
	We have 
	\begin{equation*}
	\nu^{\lambda_q}(W_n=k) \, = \, E_{\sigma'}[\nu^{\lambda_q}(W_n=k \mid \sigma')].
	\end{equation*}  
	Writing $L_n^{\sigma'}(\bar{j}):=\frac{1}{n}\vert \{k \in \{1, \ldots, n\} \mid \sigma'_{x_{k+1}}-\sigma'_{x_{k}}=\bar{j} \} \vert$ for the empirical distribution of increments of the fuzzy chain this then reads
	\begin{equation*}
	W_n= \sum_{\bar{j} \in \Z_q}\sum_{a=1}^{nL_n^{\sigma'}(\bar{j})}X_a^{\bar{j}}.
	\end{equation*}
	Here the variables $(X_a^{\bar{j}})_{\bar{j} \in \Z_q, \, a=1, \ldots, nL_n^{\sigma'}(\bar{j})}$ are conditionally on $\sigma'$ independent and,  for fixed $\bar{j} \in \Z_q$, also identically distributed with distribution $\rho^Q_q( \cdot \mid \bar{j})$ (cp. equation \eqref{Coexeq: RWPEtr})
	yet they are not necessarily integrable.
	
	We will express the distribution of $W_n$ via its characteristic function.
	By Fourier-inversion for distributions on $\Z$ (cp. Thm 15.10 in \cite{Kl14}) we have
	\begin{equation*}
	\nu^{\lambda_q}(W_n=k \mid \sigma')=\frac{1}{2\pi}\int_{-\pi}^{\pi}E[\exp(iW_n t) \mid \sigma']  \exp(-itk) \text{d}t
	\end{equation*}
	Now for any fixed $t$ conditional independence gives 
	\begin{equation*} 
	\begin{split}
	\vert E[\exp(iW_n t) \mid \sigma']  \exp(-itk)  \vert &= \vert  \prod_{\bar{j} \in \Z_q}\prod_{a=1}^{nL_n^{\sigma'}(\bar{j})}E[\exp(itX_a^{\bar{j}}) \mid \sigma'] \vert \cr
	&= \prod_{\bar{j} \in \Z_q} \vert E[\exp(itX_1^{\bar{j}}) \mid \sigma'] \vert^{nL_n^{\sigma'}(\bar{j})} \cr
	&\leq   (\max_{\bar{j} \in \Z_q} \vert E[\exp(itX_1^{\bar{j}}) \mid \sigma'] \vert)^{n \sum_{\bar{j} \in \Z_q}L_n^{\sigma'}(\bar{j})} \cr 
	&=  (\max_{\bar{j} \in \Z_q}\vert E[  \exp(itX_1^{\bar{j}}) \mid \sigma'] \vert)^n \cr
	&= (\max_{\bar{j} \in \Z_q} \vert E[\exp(itX_1^{\bar{j}})] \vert \chi(L_n^{\sigma'}(\bar{j})>0))^n \cr 
	&\leq (\max_{\bar{j} \in \Z_q}  \vert E[\exp(itX_1^{\bar{j}})] \vert)^n.
	\end{split}
	\end{equation*}
	Exercise 11 of Chapter 9.5 (p.314) in \cite{Dud02} says that if $f$ is the characteristic function of a law $P$ and if there are $s,t$ with $t \neq 0$ and $\frac{s}{t} \in \mathbb{R} \setminus \mathbb{Q}$ such that $\vert f(s) \vert = \vert f(t) \vert=1$ then $P$ must be a Dirac measure. In other words, if $P$ is not a Dirac measure then the set of points where the characteristic function achieves an absolute value equal to $1$ has Lebesgue measure $0$.
	As $Q$ is strictly positive, the above aspects combine to
	\begin{equation*}
	\vert E[\exp(iW_n t) \mid \sigma']  \exp(-itk) \vert \stackrel{n \rightarrow \infty}{\rightarrow} 0 \text{ for }\lambda- \text{a.a. } t. 
	\end{equation*}
	uniformly in $\sigma'$.
	Thus by dominated convergence
	\begin{equation*}
	\begin{split}
	\frac{1}{2\pi}\int_{-\pi}^{\pi} \vert E[\exp(iW_n t) \mid \sigma']  \exp(-itk) \vert \text{d}t \stackrel{n \rightarrow \infty}{\rightarrow} 0
	\end{split}
	\end{equation*}
	uniformly in $\sigma'$ which concludes the proof of the Theorem.
\end{proof}

\begin{proof}[Proof of Theorem \ref{thm: ident}]
	First assume that $q_1=q_2$. From $\nu^{\lambda_{q_1}}=\nu^{\lambda_{q_2}}$ it follows that the distributions of increments of the underlying fuzzy chain must be the same for both $\lambda_{q_1}$ and $\lambda_{q_2}$. Applying the Ergodic Theorem for Markov chains, the statement of the proof then follows by the fact that observing the increments of the fuzzy chain along an infinite path on the tree allows to identify the underlying boundary law up to permutational invariance and multiplication by positive constants. 
	
	More precisely, similar to the proof of Theorem \ref{thm: deloc}, we consider any path\\ $(b_1,b_2, \ldots, b_n)=((x_0,x_1), (x_1,x_2), \ldots, (x_{n-1},x_n))$ of length $n$ and let \\$\sigma'=(\sigma'_{x_i})_{i=1, \ldots, n+1}$ denote the fuzzy chain on $\Z_q$ along this path with respect to the boundary law $\lambda_{q_1}$.
	By the Ergodic Theorem for Markov chains (e.g. Theorem 4.4.1 in \cite{Br99}) we have
	\begin{equation*}
	\left(\frac{1}{n+1} \sum_{i=0}^{n} \chi_{\{\bar{k}\}}(\sigma'_{x_i}) \right)_{\bar{k} \in \Z_q} \stackrel{n \rightarrow \infty}{\rightarrow} (\alpha_{q_1}( \bar{k}))_{\bar{k} \in \Z_q} \quad \text{a.s.}
	\end{equation*} 
	where  $\alpha_{q_1}(\cdot)=\frac{\lambda_{q_1}(\cdot)^\frac{d+1}{d}}{\Vert \lambda_{q_1} \Vert_\frac{d+1}{d}^\frac{d+1}{d}}$ is the stationary distribution of the fuzzy chain.
	Now write $\sigma'_{x_i}=\sigma'_{x_0}+\sum_{j=1}^{i} \xi_{b_j}$ where $(\xi_{b_i})_{i=1, \ldots, n}$ denotes the increments of the  fuzzy chain along this path. 
	
	Fixing any $\bar{s} \in \Z_q$ and setting
	\begin{equation*}
	\tau^{q_1}_{x_i}:=\bar{s}+\sum_{j=1}^{i} \xi_{b_j}=\bar{s}-\sigma'_{x_0}+\sigma'_{x_i}=\Delta_{q_1}^{\bar{s}}+\sigma'_{x_i}
	\end{equation*}
	where $\Delta_{q_1}^{\bar{s}}=\bar{s}-\sigma'_{x_0}$ is a $\Z_q$-valued measurable function,
	we obtain a further Markov chain $(\tau^{q_1}_{x_i})_{i=1, \ldots, n+1}$.
	As $\chi_{\{k\}}(\tau^{q_1}_{x_i})=\chi_{\{k-\Delta_{q_1}^{\bar{s}}\}}(\sigma'_{x_i})$ 
	it follows that
	\begin{equation} \label{eq: ConvToBL}
	\left(\frac{1}{n+1} \sum_{i=0}^{n} \chi_{\{\bar{k}\}}(\tau^{q_1}_{x_i}) \right)_{\bar{k} \in \Z_q} \stackrel{n \rightarrow \infty}{\rightarrow} (\alpha_{q_1}(\bar{k}-\Delta_{q_1}^{\bar{s}}))_{\bar{k} \in \Z_q}=\left(\frac{\lambda_{q_1}(\bar{k}-\Delta_{q_1}^{\bar{s}})^\frac{d+1}{d}}{\Vert \lambda_{q_1}\Vert_\frac{d+1}{d}^\frac{d+1}{d}} \right)_{\bar{k} \in \Z_q}
	\end{equation} 
	almost surely.
	Applying the same procedure to the fuzzy chain associated to the boundary law $\lambda_{q_2}$ we arrive at 
	\begin{equation*}
	(\lambda_{q_2}(\bar{k}-\Delta_{q_2}^{\bar{s}}))_{\bar{k} \in \Z_q}=c(\lambda_{q_1}(\bar{k}-\Delta_{q_1}^{\bar{s}}))_{\bar{k} \in \Z_q} \quad \text{a.s.}
	\end{equation*} 
	where $c=\big(\frac{\Vert \lambda_{q_2}\Vert_\frac{d+1}{d}}{\Vert \lambda_{q_1}\Vert_\frac{d+1}{d}} \big)^\frac{d+1}{d}>0.$
	
	Hence there are some constant $c>0$ and some cyclic permutation $\rho: \Z_q \rightarrow \Z_q$ such that 
	\begin{equation*}
	\lambda_{q_2}(\bar{k})=c\lambda_{q_1}(\rho(\bar{k}))
	\end{equation*}
	which proves the case of $q_1=q_2$.
	
	In the general case let $\tilde{q}$ denote the least common multiple of $q_1$ and $q_2$. Both $\lambda_{q_1}$ and $\lambda_{q_2}$ are $\tilde{q}$-periodic, hence from the special case above we have that there are $c>0$ and some cyclic permutation $\rho: \Z_{\tilde{q}} \rightarrow \Z_{\tilde{q}}$ with $\lambda_{q_2}(\bar{k})=c\lambda_{q_1}(\rho(\bar{k}))$ for all $\bar{k} \in \Z_{\tilde{q}}$. By assumption $q_1$ and $q_2$ are the minimal periods of  $\lambda_{q_1}$ and $\lambda_{q_2}$, respectively, which implies $q_1=q_2$. This concludes the proof.
\end{proof} 
\begin{proof}[Proof of Corollary \ref{cor: Period}]
	First assume $\tilde{q}=q$. In that case from Equation \ref{eq: ConvToBL} we already know that the empirical distribution converges to a deterministic limit from which we can obtain the underlying boundary law by considering the $\frac{d}{d+1}$-th powers of its coordinates.
	
	In the case $\tilde{q} \neq q$ let $t=lcm(q,\tilde{q})$ denote the least common multiple. By $\lambda_t$ we then denote the $t$-periodic continuation of the boundary law $\lambda_q$. Note that for any $k \in \{0, \tilde{q}-1 \}$ we have the disjoint partition 
	\begin{equation}\label{eq: Partition}
	k+\tilde{q}\mathbb{Z}=\bigcup_{j=1}^{\frac{t}{\tilde{q}}}(k+(j-1)\tilde{q}+t\mathbb{Z}).
	\end{equation}
	
	Similar to the proof of Theorem \ref{thm: ident} consider a path
	
	$(b_1,b_2, \ldots, b_n)=((x_0,x_1), (x_1,x_2), \ldots, (x_{n-1},x_n))$  and let $\tau^t_{x_i}$ denote the mod-$t$ value of the total increment in $\Z_t$ along the path up to the vertex $x_i$ sampled by $\nu^{\lambda_q}$ and added up with some arbitrary starting value $s+t\Z$.  By \eqref{eq: Partition} it follows for any $k \in \{0, \ldots, \tilde{q}-1\}$
	\begin{equation}\label{eq: LimBL} 
	\frac{1}{n+1} \sum_{i=0}^{n} \chi_{\{k+\tilde{q}\mathbb{Z}\}}(\tau^{t}_{x_i}) \stackrel{n \rightarrow \infty}{\rightarrow} \sum_{j=1}^{\frac{t}{\tilde{q}}} \frac{\lambda_{t}(k+(j-1)\tilde{q}-\Delta_{t}^{\bar{s}}+t\mathbb{Z})^\frac{d+1}{d}}{\Vert \lambda_{t}\Vert_\frac{d+1}{d}^\frac{d+1}{d}}
	\end{equation}
	$\nu^{\lambda_q}$-a.s.,
	where $\Delta_{q_1}^{\bar{s}}=\bar{s}-\sigma'_{x_0}$ denotes the difference between the true starting value of the mod-$t$ fuzzy chain $(\sigma'_{x_i})_{n \geq 0}$ and $s+t\Z$.
	
	If $\tilde{q}$ is a multiple of $q$, then $t=\tilde{q}$ and we recover the values of boundary law $\lambda_t$ from the limit \eqref{eq: LimBL} for which $\nu^{\lambda_t}=\nu^{\lambda_q}$.
	
	If, on the other hand, the elementwise $\frac{d}{d+1}$-th powers of the limit \eqref{eq: LimBL} result in a $\tilde{q}$-periodic boundary law $\tilde{\lambda}_{\tilde{q}}$ for $Q$ and $\nu^{\tilde{\lambda}_{\tilde{q}}}=\nu^{\lambda_{q}}$ then $\tilde{q}$ must be a multiple of $q$. For, regarding $\tilde{\lambda}_{\tilde{q}}$ as a $t$-periodic boundary law for $Q$ it then must coincide with $\lambda_t$ up to a cyclic permutation of coordinates and $q$ was assumed to be the minimal period of $\lambda_q$.    
	
	Summarizing these arguments leads to the statement of the corollary.
	
\end{proof}

\begin{proof}[Proof of Proposition \ref{pro: GoodBinary}]
	A pair $(\gamma, \delta) \in (1, \infty) \times (0, \infty)$ lies in $G_2$ if and only if there exists an $\varepsilon>0$ such that the inequalities 
	\begin{equation} \label{eq: remBin}
	\begin{cases} \delta+\gamma \varepsilon^2  &\leq \varepsilon \\ L(\gamma, \delta)=4\gamma \varepsilon+4\delta \varepsilon^2&<1\end{cases}
	\end{equation} are satisfied. The first one is solved if and only if $\delta \leq \frac{1}{4\gamma}$, i.e., $1 \geq 4\gamma\delta$.
	For $\delta \leq \delta_0:=\frac{1}{4\gamma}$ we have the minimal positive solution \begin{equation*}
	\varepsilon(\delta,\gamma)=\frac{1}{2\gamma}(1 - \sqrt{1-4\delta \gamma}). 
	\end{equation*}
	Inserting this solution in the second inequality we obtain 
	\begin{equation*}
	L(\gamma, \delta)=2(1 - \sqrt{1-4\delta \gamma})+\frac{\delta}{\gamma^2}(1 - \sqrt{1-4\delta \gamma})^2.
	\end{equation*} 
	Writing $\sqrt{1-4\delta \gamma}=:a(\gamma, \delta)$ the inequality $L(\gamma, \delta)<1$ is equivalent to
	\begin{equation*}
	a^2(\gamma, \delta)\delta+\delta+\gamma^2<2a(\gamma, \delta)(\gamma^2+\delta).
	\end{equation*}
	Squaring both (positive) sites of this inequalilty and expanding it in the powers of $\delta$, this again is equivalent to
	\begin{equation*}
	16\gamma^2\delta^4+24\gamma^3\delta^2+\delta(16\gamma^5-4\gamma^2)-3\gamma^4 <0.
	\end{equation*}
	The equation $16\gamma^2\delta^4+24\gamma^3\delta^2+\delta(16\gamma^5-4\gamma^2)-3\gamma^4 =0$ is a quartic equation in $\delta$ with vanishing third-order coefficient for which we obtain, using MATHEMATICA, the unique positive solution and series expansion in $\gamma^{-1}$
	\begin{equation*}
	\begin{split}
	\delta(\gamma)
	&=\frac{1}{2}\sqrt{2\frac{\gamma^3-\frac{1}{4}}{\sqrt{(\gamma^3+\frac{1}{4})^\frac{2}{3}-\gamma}}-(\gamma^3+\frac{1}{4})^\frac{2}{3}-2\gamma}-\frac{1}{2}\sqrt{(\gamma^3+\frac{1}{4})^\frac{2}{3}-\gamma} \cr
	& = \frac{3}{16}\gamma^{-1}+O(\gamma^{-4}).
	\end{split}
	\end{equation*}	
	It is now easily verified that $0<\delta(\gamma) \leq \frac{1}{4\gamma}$ for all $\gamma \in (1,\infty)$. 
	
	Hence it follows that $(\gamma, \delta) \in (1, \infty) \times (0, \infty)$ lies in $G_2$ if and only if $\delta< \delta(\gamma)$.	
\end{proof}	

\begin{proof}[Proof of Theorem \ref{thm: LargeDegree}]
	By Theorem \ref{Coexthm: main} existence of the family $(\mu_i)_{i \in \Z^k}$ is guaranteed if the parameters $\delta_d(\beta)=\Vert Q_\beta \Vert_{d+1,\mathbb{Z}^k \setminus \{0\}}$ and $\gamma_d(\beta)=\Vert Q_\beta \Vert_{\frac{d+1}{2}, \mathbb{Z}^k} $ lie in the set $G_d$, i.e., if there exists an $\varepsilon>0$ such that the inequalities \ref{eq: Ball} and \ref{eq: L-const} are satisfied.
	For fixed $A>\frac{1}{v}$ let $d \geq 2$ be large enough such that $\Vert Q_{\beta_{A,d}} \Vert_{\frac{d+1}{2}}<\infty$.
	Then the set $M:=\{i \in \Z \setminus \{0\} \mid U(i)=v\}$ is finite.
	By monotonicity of $\delta_d(\beta)$ and $\gamma_d(\beta)$ in $\beta$ it suffices to show that $\delta_d(\beta_{A,d})$ and $\gamma_d(\beta_{A,d})$ lie in $G_d$.
	
	Using the short notations $q=q_{\beta_{A,d}}:=\exp(-\beta_{A,d} v)$ and $Q=Q_{\beta_{A,d}}$ we have
	\begin{equation*}
	\begin{split}
	\delta_d(\beta_{A,d}) 
	&=q \, \, ( \vert M \vert +\sum_{j \in \Z^k \setminus (M \cup \{0\})} (\tfrac{Q(j)}{q})^{d+1})^\frac{1}{d+1}  \cr
	&=\exp(- Av\frac{\log d}{d+1}) \, \left( \vert M \vert +\sum_{j \in \Z^k \setminus (M \cup \{0\})} (\tfrac{Q(j)}{q})^{d+1}\right)^\frac{1}{d+1}. 
	\end{split}
	\end{equation*} 
	By the assumption $\Vert Q_{\beta_{A,d}} \Vert_{\frac{d+1}{2}}<\infty$ the expression $\vert M \vert +\sum_{j \in \Z^k \setminus (M \cup \{0\})} (\tfrac{Q(j)}{q})^{d+1}$  is a finite number strictly decreasing with increasing $d$ and converging to $\vert M \vert$ by dominated convergence.
	Hence the factor $( \vert M \vert +\sum_{j \in \Z^k \setminus (M \cup \{0\})} (\tfrac{Q(j)}{q})^{d+1})^\frac{1}{d+1}$  is bounded from above by a positive constant to the power $\frac{1}{d+1}$. In contrast to this, the first factor is given by some constant to the power $\frac{\log d}{d+1}$.     
	Thus, for any fixed $\frac{1}{v}<A_1<A$ there is a $d_1 \in \mathbb{N}$ such that for all $d \geq d_1$ 
	\begin{equation*}
	\exp(- Av\frac{\log d}{d+1}) \, ( \vert M \vert +\sum_{j \in \Z^k \setminus (M \cup \{0\})} (\tfrac{Q(j)}{q})^{d+1})^\frac{1}{d+1} \leq \exp(- A_1v\frac{\log d}{d+1}).
	\end{equation*}
	Summarizing these observations and doing a first-order Taylor expansion we conclude that there are $\frac{1}{v}<A_2<A_1$ and $d_2 \geq d_1$ such that
	\begin{equation} \label{eq: delta_asymptotic}
	\delta_d(\beta_{A,d}) \leq \exp(-A_1 v \frac{\log d}{d+1}) \leq 1-A_2v  \frac{\log d}{d+1}
	\end{equation} 
	for all $d \geq d_2$.
	
	Similarly we have that the family $(\gamma_d(\beta_{A,d}))_{d \in \mathbb{N}}$ is bounded, hence in what follows we will simply write $\gamma$ for the upper bound of this family.
	
	In the next step we will give an upper bound on the minimal solution to the equation \eqref{eq: Ball} for large degrees. Fix any $\frac{1}{v}<A_3<A_2$ and set $\bar{\varepsilon}=\bar{\varepsilon}(A_3,d):=1-A_3 v \frac{\log d}{d+1}$. We will show that there is a $d_3 \geq d_2$ such that for $d \geq d_3$  the function $\bar{\varepsilon}$ satisfies the inequality $\gamma \bar{\varepsilon}^d+\delta_d(\beta_{A,d})  \leq \bar{\varepsilon}$.
	We have 
	\begin{equation*}
	\gamma \bar{\varepsilon}^d+\delta_d(\beta_{A,d})  \leq \gamma (1-A_3 v \frac{\log d}{d+1})^d+1-A_2 v \frac{\log d}{d+1}, 
	\end{equation*}
	so $\gamma \bar{\varepsilon}^d+\delta_d(\beta_{A,d})  \leq \bar{\varepsilon}$ is guaranteed by 
	\begin{equation*}
	\gamma (1-A_3 v \frac{\log d}{d+1})^d+1-A_2 v \frac{\log d}{d+1} \leq 1-A_3 v \frac{\log d}{d+1},
	\end{equation*}
	which is equivalent to  
	\begin{equation} \label{eq: LDA inv}
	\gamma (1-A_3 v \frac{\log d}{d+1})^d \leq (A_2-A_3) v \frac{\log d}{d+1}.
	\end{equation}
	Using first order Taylor-expansion the l.h.s. can be bounded as   
	\begin{equation} \label{eq: delta_bound}
	\begin{split}
	\gamma (1-A_3 v \frac{\log d}{d+1})^d&=\gamma \exp(d \log(1-A_3v \frac{\log d}{d+1})) \leq \gamma\exp(-d A_3v \frac{\log d}{d+1}) \cr 
	& <\gamma\exp(-\frac{d}{d+1}\log d)=\gamma(\frac{1}{d})^\frac{d}{d+1}.
	\end{split}
	\end{equation}
	Hence the quotient of the r.h.s. and the l.h.s. of equation \eqref{eq: LDA inv} is bounded from below by 
	\begin{equation*}
	\frac{(A_2-A_3)v}{\gamma}\frac{\log d}{d+1}d^{\frac{d}{d+1}}=\frac{(A_2-A_3)v}{\gamma}\frac{d}{d+1}\frac{\log d}{d^\frac{1}{d+1}} \stackrel{d \rightarrow \infty}{\rightarrow} \infty 
	\end{equation*} 
	which proves the existence of such a $d_3$ as the r.h.s of \eqref{eq: LDA inv} is strictly positive.  
	
	In the last step we insert $\bar{\varepsilon}$ into the l.h.s. of equation \eqref{eq: L-const} and obtain
	\begin{equation*}
	\begin{split}
	2d\gamma \bar{\varepsilon}^{d-1}+2d\delta_d(\beta_{A,d}) \bar{\varepsilon}^d \leq 2d \gamma(1-A_3 v \frac{\log d}{d+1})^{d-1}+2d(1-A_3 v \frac{\log d}{d+1})^{d+1}.
	\end{split}
	\end{equation*}
	Note that by Taylor expansion of the logarithm
	\begin{equation*}
	\begin{split}
	d(1-A_3 v \frac{\log d}{d+1})^{d-1}&=\exp(\log d+(d-1)\log(1-A_3 v \frac{\log d}{d+1})) \cr 
	&\leq \exp(\log d-\frac{d-1}{d+1}A_3v\log d) \cr 
	&=\exp((1-\frac{d-1}{d+1}A_3v)\log d).
	\end{split}
	\end{equation*} 
	Hence, by the assumption $A_3>\frac{1}{v}$ this converges to zero as $d$ tends to infinity.
	This shows that there is a $d_0 \geq d_3$ such that for all $d \geq d_0$ the upper bound $\bar{\varepsilon}$ satisfies the inequality \eqref{eq: L-const} concluding the proof of the first statement of the Theorem.
	
	To prove the second statement of the theorem, the asymptotic upper localization bound, let $d \geq d_0$. Recall that by Theorem \ref{Coexthm: main}
	\[\frac{\mu_i(\sigma_0 \neq i)}{\mu_i(\sigma_0=i)}\leq \left( \delta \frac{1+\delta\varepsilon(\gamma, \delta)^d}{1-\gamma\varepsilon(\gamma,\delta)^{d-1}} \right)^{d+1}.\]
	Now the inequality \eqref{eq: L-const} gives $-\gamma\varepsilon^{d-1}>\delta\varepsilon^d-\frac{1}{2d}$, hence
	\begin{equation*}
	\begin{split}
	\left( \delta \frac{1+\delta\varepsilon(\gamma, \delta)^d}{1-\gamma\varepsilon(\gamma,\delta)^{d-1}} \right)^{d+1} &< \left( \delta \frac{1+\delta\varepsilon(\gamma, \delta)^d}{1+\delta\varepsilon(\gamma, \delta)^d-\frac{1}{2d}} \right)^{d+1} \cr 
	&=\left( \delta \frac{1}{1-\frac{1}{2d(1+\delta \varepsilon^d)}} \right)^{d+1}.
	\end{split}
	\end{equation*}
	From inequality \eqref{eq: delta_asymptotic}  we know that $\delta \leq 1-\frac{\log d}{d}$ and an approximation similar to that of \eqref{eq: delta_bound} then gives $\delta^{d+1} \leq \frac{1}{d}$. 
	
	At last, the bound $(1-\frac{1}{2d(1+\delta \varepsilon^d)})^{d+1}>(1-\frac{1}{2d})^{d+1} \stackrel{d \rightarrow \infty}{\rightarrow} \exp(-\frac{1}{2})$ concludes 
	\begin{equation*}
	\frac{\mu_i(\sigma_0 \neq i)}{\mu_i(\sigma_0=i)} < C \,  \frac{1}{d} \, \stackrel{d \rightarrow \infty}{\rightarrow}0 
	\end{equation*}
	for some constant $C>0$. 
\end{proof}
Finally we give the proof of the well-definedness of the model, using a variation of 
the Young-inequality estimates seen before. 
\begin{proof}[Proof of Lemma \ref{lem: existenceModel}]
	Assume that $\Vert Q \Vert_{\frac{d+1}{2},\mathbb{Z}^k}<\infty$. 
	We claim that for any finite connected volume $\Lambda \subset V$ and any family of weights $(\lambda_a)_{a \in \partial \Lambda}$, where \\ $\lambda_a \in (0,\infty)^{\Z^k} \cap l_{\frac{d+1}{d}}(\mathbb{Z}^k)$, 
	the following auxiliary expression
	\begin{equation} \label{eq: ExistenceModel}
	\begin{split}
	\bar{Z_\Lambda}(\lambda_{\partial \Lambda})&:=\sum_{\omega_{\Lambda \cup \partial \Lambda}} \prod_{y \in \partial \Lambda} \lambda_{y}(\omega_y) \prod_{b \cap \Lambda \neq \emptyset} Q_b(\omega_b) \cr 
	&=\sum_{\omega_{\partial \Lambda}}\sum_{\eta_\Lambda}\prod_{y \in \partial \Lambda} \lambda_{y}(\omega_y) \prod_{b \cap \Lambda \neq \emptyset} Q_b((\eta_\Lambda\omega_{\partial \Lambda})_b) \cr 
	&=\sum_{\omega_{\partial \Lambda}}\prod_{y \in \partial \Lambda} \lambda_{y}(\omega_y)\left(\sum_{\eta_\Lambda}\prod_{b \cap \Lambda \neq \emptyset} Q_b((\eta_\Lambda\omega_{\partial \Lambda})_b) \right)\cr 
	&=\sum_{\omega_{\partial \Lambda}}\prod_{y \in \partial \Lambda} \lambda_{y}(\omega_y) Z_\Lambda(\omega_{\partial \Lambda})
	\end{split}
	\end{equation} 
	is finite.
	
	From this would follow the finiteness of the partition function $Z_\Lambda(\omega_{\partial \Lambda})$ for any boundary condition $\omega_{\partial \Lambda}$.
	
	The proof of the claim is done by induction on $\vert \Lambda \vert$. For convenience, we start with the induction step, i.e., we assume that the claim holds true for some finite connected volume $\Lambda \subset V$ and any family of weights $(\lambda_a)_{a \in \partial \Lambda}$ where $\lambda_a \in (0,\infty)^{\Z^k} \cap l_{\frac{d+1}{d}}(\mathbb{Z}^k)$. Let $v \in V \setminus \Lambda$ be some adjacent vertex and $(\lambda_a)_{a \in \partial (\Lambda \cup v)}$ be some family of weights in $(0,\infty)^{\Z^k} \cap l_{\frac{d+1}{d}}(\mathbb{Z}^k)$. As in Theorem \ref{Coex: BLMC} let $v_\Lambda$ denote the unique nearest-neighbor of $v$ in $\Lambda$. Then we have
	\begin{equation*}
	\bar{Z}_{\Lambda \cup v}(\lambda_{\partial \Lambda \setminus v},\lambda_{\partial v \setminus v_\Lambda})=\bar{Z}_{\Lambda}(\lambda_{\partial \Lambda \setminus v},\tilde{\lambda}_v),
	\end{equation*}
	where
	\begin{equation*}
	\tilde{\lambda}_v(i)=\prod_{x\in \partial v \setminus v_\Lambda}\sum_{j\in \mathbb{Z}^k}Q_{xv}(i - j)\lambda_{x}(j)
	\end{equation*}
	which follows by summing over spins for $x \in \partial v \setminus v_\Lambda$.
	To conclude the induction step $\bar{Z}_{\Lambda \cup v}(\lambda_{\partial \Lambda \setminus v},\lambda_{\partial v \setminus v_\Lambda})<\infty$ 
	it hence suffices to show that $\Vert \tilde{\lambda}_v \Vert_{\frac{d+1}{d}, \mathbb{Z}^k} < \infty$.
	
	First, generalized H\"older's inequality for $d$ factors, noting that\\ $\frac{d}{d+1}=\sum_{a=1}^d\frac{1}{d+1}$, gives
	\begin{equation*}
	\Vert \prod_{x\in \partial v \setminus v_\Lambda}\sum_{j\in \mathbb{Z}^k}Q_{xv}(\cdot - j)\lambda_{x}(j) \Vert_{\frac{d+1}{d},\mathbb{Z}^k} \leq  \prod_{x\in \partial v \setminus v_\Lambda}\Vert \, \sum_{j\in \mathbb{Z}^k}Q_{xv}(\cdot - j)\lambda_{x}(j) \, \Vert_{d+1, \mathbb{Z}^k}.
	\end{equation*}
	Now, from Young's inequality for convolutions with  $1+\frac{1}{d+1}=\frac{2}{d+1}+\frac{d}{d+1}$  it follows
	\begin{equation*}
	\prod_{x\in \partial v \setminus v_\Lambda}\Vert \, \sum_{j\in \mathbb{Z}^k}Q_{xv}(\cdot - j)\lambda_{x}(j) \, \Vert_{d+1, \mathbb{Z}^k} \leq \prod_{x\in \partial v \setminus v_\Lambda}\Vert Q_{xv} \Vert_{\frac{d+1}{2}, \mathbb{Z}^k} \Vert \lambda_{x} \Vert_{\frac{d+1}{d},\mathbb{Z}^k}<  \infty
	\end{equation*}
	which concludes the induction step.
	It remains to prove the initial step. In case of a single-element volume $\Lambda=\{v\}$ equation \eqref{eq: ExistenceModel} simply reads
	\begin{equation*}
	\begin{split}
	&\bar{Z}_{\{v\}}(\lambda_{\partial v}) \cr 
	&= \sum_{i \in \mathbb{Z}^k} \sum_{(j_1, \ldots, j_{d+1}) \in (\mathbb{Z}^k)^{d+1}}\prod_{y \in \partial v}Q_{yv}(i-j_y)\lambda_y(j_y) \cr 
	&=\sum_{i \in \mathbb{Z}^k}\prod_{y \in \partial v}\sum_{j \in \mathbb{Z}^k}Q_{yv}(i-j)\lambda_y(j)=\Vert \prod_{y \in \partial v}\sum_{j \in \mathbb{Z}^k}Q_{yv}(\cdot-j)\lambda_y(j) \Vert_{1, \mathbb{Z}^k}.
	\end{split}
	\end{equation*} 
	Again, H\"older's inequality applied with $1=\sum_{a=1}^{d+1}\frac{1}{d+1}$ 
	gives 
	\begin{equation*} 
	\Vert \prod_{y \in \partial v}\sum_{j \in \mathbb{Z}^k}Q_{yv}(\cdot-j)\lambda_y(j) \Vert_{1, \mathbb{Z}^k} \leq  \prod_{y \in \partial v}\Vert \, \sum_{j\in \mathbb{Z}^k}Q_{yv}(\cdot - j)\lambda_{y}(j) \, \Vert_{d+1, \mathbb{Z}^k},
	\end{equation*}
	and the rest of the proof follows from applying Young's inequality as above.	
\end{proof}

%\emergencystretch=1em
\printbibliography
\end{document}